\newcommand{\E}{\mathbb{E}}
\newtheorem*{rep@theorem}{\rep@title}
\newcommand{\newreptheorem}[2]{%
\newenvironment{rep#1}[1]{%
 \def\rep@title{#2 \ref{##1}}%
 \begin{rep@theorem}}%
 {\end{rep@theorem}}}
\newcommand \F{\mathcal{F}}
\newtheorem{theorem}{Theorem}		
\newtheorem{lemma}{Lemma}		
\newtheorem{challenge}{Challenge}
\newtheorem*{corollary*}{Corollary}		
\DeclarePairedDelimiter{\norm}{\lVert}{\rVert}
\colorlet{MyRed}{Crimson!90!Black}
\colorlet{MyBlue}{MediumBlue!90!Black}
\colorlet{MyGreen}{DarkGreen!80!Black}
\numberwithin{equation}{section}		
\crefname{assumption}{Assumption}{Assumptions}
\newcommand{\debug}[1]{#1}		
\newcommand{\revise}[1]{{\color{blue}#1}}		
\theoremstyle{definition}
\newtheorem*{definition*}{Definition}		
\newtheorem*{assumption*}{Assumptions}		
\theoremstyle{remark}
\newtheorem{remark}{Remark}		
\newtheorem*{remark*}{Remark}		
\newtheorem*{example*}{Example}		
\newcounter{proofpart}
\numberwithin{example}{section}		
\DeclarePairedDelimiterX{\setdef}[2]{\{}{\}}{#1:#2}		
\DeclarePairedDelimiterXPP{\exclude}[1]{\mathopen{}\setminus}{\{}{\}}{}{#1}
\DeclarePairedDelimiterX{\braket}[2]{\langle}{\rangle}{#1,#2}		
\DeclareMathOperator{\ex}{\mathbb{E}}		
\DeclareMathOperator{\prob}{\mathbb{P}}		
\DeclarePairedDelimiterXPP{\exof}[1]{\ex}{[}{]}{}{
 #1}
\DeclarePairedDelimiterXPP{\probof}[1]{\prob}{(}{)}{}{
 #1}
\DeclarePairedDelimiterX{\product}[2]{\langle}{\rangle}{#1,#2}		
\DeclarePairedDelimiterXPP{\dnorm}[1]{}{\lVert}{\rVert}{_{\ast}}{#1}		
\newcommand{\bs}[1]{\left[ {#1} \right]} 
\newcommand{\bc}[1]{\left\{ {#1} \right\}} 
\newcommand{\spider}{\debug{\textsc{Spider}}\xspace}
\newcommand{\spiderboost}{\debug{\textsc{SpiderBoost}}\xspace}
\newcommand{\adaspider}{\debug{\textsc{AdaSpider}}\xspace}
\newcommand{\adagrad}{\debug{\textsc{AdaGrad}}\xspace}
\newcommand{\svrg}{\debug{\textsc{Svrg}}\xspace}
\newcommand{\sgd}{\debug{\textsc{Sgd}}\xspace}
\newcommand{\adasvrg}{\debug{\textsc{AdaSvrg}}\xspace}
\newcommand{\gd}{\debug{\textsc{Gd}}\xspace}
\newcommand{\nExp}{5\xspace}
\title
[AdaSpider]
{Adaptive Stochastic Variance Reduction \\ for Non-convex Finite-Sum Minimization}
\author
[A.~Kavis]
{Ali Kavis$^{\boldsymbol{\ast}, \dag}$}
\address{$^{\boldsymbol{\ast}}$\,%
JOINT FIRST AUTHORS.}
\email{ali.kavis@epfl.ch}
\author
[S.~Skoulakis]
{Stratis Skoulakis$^{\boldsymbol{\ast}, \dag}$}
\address{$^{\dag}$\,%
Laboratory for Information and Inference Systems, IEM STI EPFL, Lausanne, Switzerland.}
\email{efstratios.skoulakis@epfl.ch}
\author
[K.~Antonakopoulos]
{Kimon Antonakopoulos$^{\dag}$}
\email{kimon.antonakopoulos@epfl.ch}
\author
[L.~Dadi]
{Leello Tadesse Dadi$^{\dag}$}
\email{leello.dadi@epfl.ch}
\author
[V.~Cevher]
{Volkan Cevher$^{\dag}$}
\email{volkan.cevher@epfl.ch}
\subjclass[2020]{Primary 90C25, 90C15; secondary 68Q32, 68T05.}
\keywords{%
variance reduction;
stochastic optimization;
nonconvex optimization;
adaptive methods;
finite-sum.
}
\begin{document}

\begin{abstract}
We propose an adaptive variance-reduction method, called \adaspider, for minimization of $L$-smooth, non-convex functions with a finite-sum structure. In essence, \adaspider combines an \adagrad-inspired \citep{duchi2011adaptive,MS10}, but a fairly distinct, adaptive step-size schedule with the recursive \textit{stochastic path integrated estimator} proposed in \citet{fang2018spider}. To our knowledge, \adaspider is the first parameter-free non-convex variance-reduction method in the sense that it does not require the knowledge of problem-dependent parameters, such as smoothness constant $L$, target accuracy $\epsilon$ or any bound on gradient norms. In doing so, we are able to compute an $\epsilon$-stationary point with $\tilde{O}\left(n + \sqrt{n}/\epsilon^2\right)$ oracle-calls, which matches the respective lower bound up to logarithmic factors.
\end{abstract}

\maketitle

\section{Introduction}

This paper studies smooth, non-convex minimization problems with the following finite-sum structure:
\begin{align} \label{eq:problem} \tag{Prob}
    \min_{x \in \mathbb R^d} f(x) := \frac{1}{n} \sum_{i=1}^{n} f_i(x),
\end{align}
where each component function $f_i: \mathbb R^d \to \mathbb R$ is $L$-smooth and is possibly non-convex, and we further assume $f$ is also non-convex. We seek to find an $\epsilon$-approximate first-order stationary point $\hat x$ of $f$, such that  $\norm{\nabla f(\hat x)} \leq \epsilon$, where $\epsilon >0$ is the accuracy of the desired solution.

\if 0
We also do not assume that $f$ is convex. 
Within the aforementioned setting, we aim our attention at the computation of an $\epsilon$-approximate stationary point $\hat x$, i.e. $\norm{\nabla f(\hat x)} \leq \epsilon$, where $\epsilon >0$ is the accuracy of the solution.
\fi 

This structure captures many interesting learning problems from empirical risk minimization to training of neural networks. \textit{First-order methods} have been the standard choice for solving \eqref{eq:problem}, due to their efficiency and favorable practical behavior. In that regard, while gradient descent (\gd) requires $O(n/\epsilon^2)$ gradient computations, stochastic gradient descent (\sgd) requires $O(1/\epsilon^4)$ overall gradient computations. In many interesting machine learning applications $n$ tends to be large, e.g., training a neural network for image classification with very big image datasets \citep{deng2009imagenet}, hence \sgd typically leads to better practical performance.


To leverage the best of both regimes, \gd and \sgd, the so-called variance reduction (VR) framework combines the \textit{faster convergence rate} of \gd with the \textit{low per-iteration complexity} of \sgd. Originally proposed for solving strongly-convex problems \citep{johnson2013accelerating, defazio2014saga, nguyen2017sarah}, variance reduction frameworks essentially generate low-variance gradient estimates by maintaining a balance between periodic full gradient computations and stochastic (mini-batch) gradients. 
VR methods and their theoretical behavior for \textit{convex problems} have been well-studied under various problem setups and assumptions, including $\mu$-strongly convex functions with $O(n + (L / \mu) \log(1/\epsilon))$ complexity \citep{johnson2013accelerating, nguyen2017sarah, defazio2014saga}; $\mu$-strongly convex functions with accelerated $O(n + \sqrt{L / \mu} \log(1/\epsilon))$ complexity \citep{allenzhu2018katyusha, lan2019unified, song2020variance} and smooth, convex functions with $\tilde{O}(n + 1/\epsilon)$ complexity \citep{allenzhu2016improved, song2020variance, dubois2021svrg}. 
For non-convex minimization, earlier attempts extended the existing VR frameworks, achieving the first rates of order $O(n + n^{2/3} / \epsilon^2)$ with sub-optimal dependence on $n$~\citep{reddi2016stochastic, zhou2018stochastic, allenzhu2017natasha, li2018simple}. 
The most recent non-convex VR methods~\citep{FLLZ18, WJZLT19, li2021page, pham2019proxsarah, li2021zerosarah} close this gap and achieve the optimal gradient oracle complexity of $O(n + \sqrt{n}/\epsilon^2)$~\citep{FLLZ18}.

\textbf{Adaptivity and First-order Optimization}

The selection of the step-size is of great importance in both the theoretical and practical performance of first-order methods, including the aforementioned VR methods. In the case of $L$-smooth minimization, first-order methods need the knowledge of $L$ so as to adequately select their step-size~\citep{nesterov2003introductory}, otherwise the method is not guaranteed to be convergent and might even diverge \citep{dubois2021svrg,LNEN22}.
To elucidate, classical analysis relies on the (expected) descent property and guarantees that the algorithm monotonically makes progress every iteration. To enforce this property everywhere on the optimization landscape, one needs to pick the step-size as $\gamma_t \leq O(1 / L)$, which restricts the step length of the algorithm with respect to the worst-case constant $L$. On the other hand, estimating the smoothness constant for an objective of interest, such as neural networks, is a very hard task \citep{GRC20}. At the same time, using crude bounds on the smoothness constant leads to very small step-sizes and consequently to poorer convergence. In practice the step-size is tuned through an empirical search over a range of hand-picked values that adds a considerable computational overhead and burden. In order to alleviate the burden of tuning process, we need step-sizes that adjust in accordance with the optimization path.

A popular line of research studies first-order methods that \textit{adaptively} select their step-size by taking advantage of the previously produced point. In many settings of interest, these \textit{adaptive methods} are able to guarantee optimal convergence rates without requiring the knowledge of the smoothness constant $L$ while they often admit superior empirical performance due to their ability to decrease the step-size according to the local geometry of the objective function. Inspired by AdaGrad introduced in the concurrent seminal works of \citep{duchi2011adaptive} \revise{\citep{mcmahan2010adaptive}}, a recent line of works \citep{levy2018online, kavis2019universal, joulani2020simpler, ene2021adaptive} propose adaptive gradient methods that given access to \textit{noiseless gradient-estimates} achieve accelerated rates in the case of $L$-smooth convex minimization without requiring the knowledge of smoothness constant $L$.
Similarly, \citet{ene2021adaptive} and \citet{antonakopoulos2020adaptive} propose adaptive methods with optimal convergence rates for monotone variational inequalities while \citet{antonakopoulosP21} provide adaptive methods for monotone variational inequalities assuming access to \textit{relative noise gradient-estimates}. 
\citet{HAM21} and \citet{VAM21} study the convergence properties of adaptive first-order methods for routing and generic games.

\textit{\textbf{Adaptive non-convex methods for General Noise}} Related to our work is a recent line of papers studying adaptive first-order methods under the \textit{general noise model}. In this setting, a method is assumed to access unbiased stochastic estimate of the gradient with bounded variance. This is a more general setting than finite-sum optimization that comes with worse lower bounds, i.e. $\Omega(1/\epsilon^4)$\footnote{We remark that in the case of finite-sum minimization there exist variance reduction methods with $O(n+\sqrt{n}/\epsilon^2)$ gradient complexity \cite{fang2018spider,WJZLT19}.} gradient-estimates are needed so as to compute an $\epsilon$-stationary point. A recent line of works study adaptive first-order methods that are able to achieve near-optimal oracle-complexity while being oblivious to the smoothness constant $L$ and the variance of the estimator \cite{WWB19,FTCMSW22,LiO19,LKC22}. For example Ward et al. \cite{WWB19} established that the adaptive method called AdaGrad-Norm is able to achieve $\tilde{O}(1/\epsilon^4)$ gradient-complexity in the general noise model. In their recent work, Faw et al. \cite{FTCMSW22} significantly extended the results of Ward et al. \cite{WWB19} by showing that AdaGrad-Norm achieves the same rates even in case the gradient admits unbounded norm (a restrictive assumption in \cite{WWB19}) while their result persists even if the variance increases with the gradient norm. In the slightly more restrictive setting at which the objective function $f(x):= \E_{\xi \sim \mathcal{D}}g(x,\xi)$ where each estimate $g(x, \xi)$ is $L$-smooth with respect to $x$ for all $\xi$, \citet{levy2021storm} proposed an adaptive method called STORM++ that achieves $O(1/\epsilon^3)$ gradient-complexity and that
removes the requirement of the knowledge on problem parameters (e.g., smoothness constant, absolute bounds on gradient norms) that the original STORM method proposed by Cutkosky et al. \cite{cutkosky2019storm} requires. The latter gradient-complexity matches the $\Omega(1/\epsilon^3)$ lower bound of Arjevani et al. \cite{arjevani19}.

\textbf{Adaptivity and Finite-Sum minimization}

In parallel with what we discussed earlier, existing variance-reduction methods (VR) crucially need to know the smoothness constant $L$ to select their step-size appropriately to guarantee their convergence. To this end, the following natural question arises
\begin{center}
\textit{Can we design \textbf{adaptive} VR methods that achieve the optimal gradient computation complexity?}
\end{center}
\citet{li2020almost} and \citet{tan2016barzilai} were the first to propose adaptive variance-reduction methods by using the Barzilai-Borwein step-size \citep{barzilai1988two}. Despite their promising empirical performance, these methods do not admit formal convergence guarantees. When the objective function $f$ in \eqref{eq:problem} is convex, \cite{dubois2021svrg} recently proposed an adaptive VR method requiring $O(n + 1/\epsilon)$ gradient computation while, shortly after, \cite{LNEN22} proposed an \textit{accelerated} adaptive VR method requiring $O(n + \sqrt{n}/\sqrt{\epsilon})$ gradient computations. 

To the best of our knowledge, there is no adaptive VR method in the case where $f$ is \textit{non-convex}. We remark that $f$ being non-convex captures the most interesting settings such as minimizing the empirical loss of deep neural network where each $f_i$ stands for the loss with respect to $i$-th data point and thus is a non-convex function in the parameters of the neural architecture. Through this particular example, we could motivate adaptive VR methods in two fronts: first, even estimating the smoothness constant $L$ of a deep neural network is prohibitive \cite{GRC20}, and at the same time, the parameter $n$ in \eqref{eq:problem} equals the number of data samples, which can be very large in practice and is prohibitive for the use of deterministic methods.

\paragraph{Contribution and Techniques}
In this work we present an adaptive VR method, called \adaspider, that converges to an $\epsilon$-\textit{stationary point} for \eqref{eq:problem} by using $\tilde{O}(n + \sqrt{n}L^2/\epsilon^2)$ gradient computations. Our gradient complexity bound matches the existing lower bounds up to logarithmic factors \citep{fang2018spider}.

\begin{table}
\begin{tabular}{||c c c c||} 
 \hline $f(x)$
 & \textbf{Non-Adaptive VR} & \textbf{Adaptive VR} & \textbf{Complexity Lower Bound} \\ [0.5ex]
 \hline\hline
 convex & $\tilde{O}\left(n + \sqrt{\frac{n}{\epsilon}} \right)$ & $\tilde{O}\left(n + \sqrt{\frac{n}{\epsilon}} \right)$ & $\Omega( n + \sqrt{\frac{n}{\epsilon}})$\\
 (\textit{$\epsilon$-opt. solution}) & \cite{LLZ19} & \cite{LNEN22} & \cite{WS16}\\
 \hline
 
 convex & $\tilde{O}\left(n + \frac{1}{\epsilon} \right)$ & $\tilde{O}\left(n + \frac{1}{\epsilon}\right)$ & $\Omega( n + \sqrt{\frac{n}{\epsilon}})$\\
 (\textit{$\epsilon$-opt. solution}) & \cite{ZY16} & \cite{dubois2021svrg} & \cite{WS16}\\
 \hline
 non-convex& $O\left(n + \frac{\sqrt{n}}{\epsilon^2}\right)$& $\tilde{O}\left(n + \frac{\sqrt{n}}{\epsilon^2}\right)$  & $\Omega \left(n + \frac{\sqrt{n}}{\epsilon^2}\right)$
\\(\textit{$\epsilon$-stat. point}) & \cite{fang2018spider} & \textcolor{blue}{This work} & \cite{fang2018spider} \\
 \hline
\end{tabular}
\caption{In the following table we present the gradient computation complexity of the existing non-adaptive and adaptive variance reduction methods for both convex and non-convex finite-sum minimization. Since for there are multiple non-adaptive VR methods, we present the earliest-proposed method matching up to logarithmic factors the respective lower bounds.}
\end{table}

\adaspider combines an adaptive step-size schedule in the lines proposed by \adagrad~\citep{duchi2011adaptive} with the variance-reduction mechanism based on the \textit{stochastic path integrated differential estimator} of the \spider algorithm~\citep{fang2018spider}. More precisely, \adaspider selects the step-size by aggregating the norm of its recursive estimator, while following a single-loop structure as in \citet{fang2018spider}.

Our contributions and techniques can be summarized as follows:
\begin{itemize}
    \item
    To our knowledge, \adaspider is \textit{the first} parameter-free method in the sense that it is both \textit{accuracy-independent} and is oblivious to the knowledge of any problem parameters including $L$. Moreover, $\epsilon$-independence enables us to provide \textit{any-iterate} guarantees. While \spider needs both $\epsilon$ and $L$ to set its step-size as $\min(\frac{\epsilon}{L \sqrt{n} \norm{\nabla_t}, },\frac{1}{2\sqrt{n}L})$ to achieve optimal gradient complexity \citep{fang2018spider}, all other existing non-convex methods must know at least the value of $L$ in order to guarantee convergence \cite{allenzhu2016variance,WJZLT19}. 
    \item
    We introduce a novel step-size schedule $\gamma_t:= n^{-1/4}\left(\sqrt{n} + \sum_{s=0}^t \norm{\nabla_s}^2  \right)^{-1/2}$ where $\nabla_s$ is the recursive variance-reduced estimator at round $s$. By identifying a unique additive/multiplicative form for integrating $n$, we manage to achieve optimal dependence on the number of components. We note that Adaspider can be viewed as \spider with the step-size of AdaGrad-Norm \cite{FTCMSW22,WWB19,SM10,orabona2015scale} were the parameters are respectively selected as $\eta := n^{1/4}$ and $b_0^2 :=\sqrt{n}$ \cite{FTCMSW22}.
    \item 
    We show how to combine the above adaptive step-size schedule with the recursive \spider estimator in order to ensure that the \textit{average variance} $\frac{1}{T} \sum_{t=0}^{T-1} \E\left[\norm{\nabla_t - \nabla f(x_t)}\right]$ decays at a rate $\tilde{O}\left(n^{1/4}/\sqrt{T} \right)$. This might be of independent interest for other variance reduction techniques. 
\end{itemize}
We follow a novel technical path that uses the adaptivity of the step-size to bound the overall variance of the process. This fact differentiates our approach from the previous adaptive and non adaptive VR approaches (see Section~\ref{s:sketch_non-convex} for further details) and provides us a surprisingly concise analysis.
\begin{remark}
Our convergence results do not require bounded gradients that is typically a restrictive assumption that the analysis of the adaptive methods for stochastic optimization require. We overcome this obstacle by using the fact $\norm{x_t - x_{t-1}} \leq 1$ (due to the step-size selection) and thus $\norm{\nabla f(x_t) - \nabla f(x_{t-1})} \leq L\norm{x_t - x_{t-1}} \leq L$. The latter leads to the following upper bound on the gradient norm, $\norm{\nabla f(x_t)} \leq LT + \norm{\nabla f(x_0)}$ that however leads to only a logarithmic overhead in the final bound (see Lemma~\ref{l:bound_traj}). A similar idea is used by Faw et al. \cite{FTCMSW22} (Lemma~$2$) in order to remove the bounded gradient assumption on the convergence rates of AdaGrad-Norm under general noise.    
\end{remark}

\section{Setup and Preliminaries} \label{sec:preliminaries}
During the whole of this manuscript, we consider that the non-convex objective function $f: \mathbb R^d \mapsto \mathbb R$ possesses a finite-sum structure
\begin{align*}
    f(x) = \frac{1}{n} \sum_{i=1}^n f_i(x)
\end{align*}
where each component function $f_i$ is \textit{$L$-smooth} (or alternatively has \textit{$L$-Lipschitz gradient}) and (possibly) non-convex. 
To quantify the performance of our algorithm within the context of non-convex minimization, we want to find an \textit{$\epsilon$-first order stationary point} $\hat{x} \in \mathbb R^d$ such that
\begin{align*}
    \norm{\nabla f(\hat{x})} \leq \epsilon.
\end{align*}
For notational simplicity we define $\norm{\cdot}$ as the Euclidean norm. 
Then, we say that a continuously differentiable function $f$ is \textit{$L$-smooth} if
\begin{align} \label{eq:smoothness}
    \norm{\nabla f(x) - \nabla f(y)} \leq L  \norm{x - y},
\end{align}
which admits the following equivalent form,
\begin{align} \label{eq:smoothness2}
    f(x) \leq f(y) + \nabla f(y)^\top  (x - y) + \frac{L}{2} \norm{x - y}^2~~~\text{for all } x , y \in \mathbb R^d.
\end{align}
Observe that smoothness of each component immediately suggests that objective $f$ is $L$-smooth itself. 
Since we are studying randomized algorithms for finite-sum minimization problems, we do not consider any variance bounds on the gradients of components. We only assume that we have access to an oracle which returns the gradient of individual components when queried.

\section{Adaptive SPIDER algorithm and convergence results}\label{s:theorem}
In this section, we present our adaptive variance reduction method, called \adaspider (Algorithm~\ref{alg:ad_SVRG}) which exploits the variance reduction properties of the \textit{stochastic
path integrated differential estimator} proposed in \citep{fang2018spider} while combining it with an AdaGrad-type step-size construction \cite{duchi2011adaptive}. Unlike the original \spider method \cite{fang2018spider}, our algorithm admits anytime guarantees, i.e., we don't need to specify the accuracy $\epsilon$ a priori. Additionally, our algorithm does not need to know the smoothness parameter $L$ and guarantees convergence without any tuning procedure.

\begin{algorithm}[H]
  \caption{Adaptive SPIDER (\adaspider)}\label{alg:ad_SVRG}
 \textbf{Input:} $x_0 \in \mathbb{R}^d, \beta_0>0 , G_0>0$
 \begin{algorithmic}[1]
 
 \STATE $G \leftarrow 0$
 
 \FOR {$t=0, ..., T-1$}
    \IF{ $t ~\mathrm{mod}~n = 0$}
        \smallskip    
        \STATE $\nabla_t \leftarrow \nabla f(x_t)$ 
    \ELSE{
        \STATE pick $i_t \in \{1,\ldots,n\}$ uniformly at random
        \smallskip
        \STATE $\nabla_t \leftarrow \nabla f_{i_t}(x_t) - \nabla f_{i_t}(x_{t-1}) + \nabla_{t-1}$}
    \ENDIF

    \smallskip
    \STATE $\gamma_t \leftarrow 1 / \left(n^{1/4}\beta_0\sqrt{ n^{1/2}G_0^2 + \sum_{s=0}^{t} \norm{\nabla_s}^2 }\right)$
    \smallskip

    \STATE $x_{t+1} \leftarrow x_t - \gamma_t \cdot \nabla_t$
    \smallskip
    \ENDFOR
        \smallskip
        \smallskip
    \STATE \textbf{return } uniformly at random $\{x_0,\ldots,x_{T-1}\}$.
\end{algorithmic}
\end{algorithm}

\begin{remark}
In Algorithm~\ref{alg:ad_SVRG} the units of $G_0$ are the same with the units of $\nabla f(x_t)$ i.e. $f / x$ while the units of $\beta_0$ are $x^{-1}$. The latter is important so that the step-size $\gamma_t$ takes the right units i.e. $x^2 / f$.
\end{remark}

As Algorithm~\ref{alg:ad_SVRG} indicates, \adaspider performs a full-gradient computation every $n$ iterations while in the rest iterations updates the variance-reduced gradient estimator in a recursive manner, $\nabla_t \leftarrow \nabla f_{i_t}(x_t) - \nabla f_{i_t}(x_{t-1}) + \nabla_{t-1}$. The adaptive nature of \adaspider comes from the selection of the step-size at Step~$9$ that only depends on the norms of estimates produced by the algorithm in the previous steps.

Before presenting the formal convergence guarantees of \adaspider (stated in Theorem~\ref{t:non-convex}), we present the cornerstone idea behind its design and motivate the analysis for controlling the overall variance of the process through the \textit{adaptivity of the step-size}. This conceptual novelty differentiates our work form the previous adaptive VR methods \cite{dubois2021svrg,LNEN22} at which the adaptivity of the step-size is only used for adapting to the smoothness constant $L$, and their constructions come with additional challenges in bounding the variance.  
As a result, the following challenge is the first to be tackled by the design of a VR method.
\begin{challenge}\label{challenge:1}
Does the average variance of the estimator, $\frac{1}{T} \sum_{t=0}^{T-1}\E\left[\norm{\nabla_t - \nabla f(x_t)}\right]$, diminishes at a sufficiently fast rate? 
\end{challenge}

Up next we explain why combining the variance-reduction estimator of Step~$7$ with the adaptive step-size of Step~$9$ provides a surprisingly concise answer to Challenge~\ref{challenge:1}. We remark that \spider is able to control the variance at any iterations by choosing $\gamma_t: = \min(\frac{\epsilon}{L \sqrt{n} \norm{\nabla_t}, },\frac{1}{2\sqrt{n}L})$ as step-size. The latter
enforces the method to make tiny steps, $\norm{x_t - x_{t-1}} \leq \epsilon/L\sqrt{n}$ which results in $\epsilon$-bounded variance at any iteration. The latter proposed \spiderboost\cite{WJZLT19} provides the same gradient-complexity bounds with \spider but through the \textit{accuracy-independent} step-size $\gamma= 1/L$. \spiderboost handles Challenge~\ref{challenge:1} by using a dense gradient-computations schedule\footnote{\adaspider computes a full-gradient every $\sqrt{n}$ steps and at the intermediate steps uses batches of size $\sqrt{n}$.} combined with amortization arguments based on the descent inequality (this is why the knowledge of $L$ is necessary in its analysis). We remark that \adaspider , despite being oblivious to $L$ and accuracy $\epsilon$, admits a significantly simpler analysis by exploiting the adaptability of its step-size.

In the rest of the section we present our approach to Challenge~\ref{challenge:1} and we conclude the section with Theorem~\ref{t:non-convex} stating the formal convergence guarantees of \adaspider.

\textbf{Handling the variance with adaptive step-size} 
We start with the following variance aggregation lemma that is folklore in (VR) literature  (e.g. \cite{ZH16}).

\begin{lemma}\label{l:var_red}
Define the gradient estimator at point $x$ as $\nabla_x := \nabla f_i(x) - \nabla f_i(y) + \nabla_y$ where $i$ is sampled uniformly at random from $\{1,\ldots,n\}$. Then,
\[\E\left[ \norm{\nabla_x - \nabla f(x)}^2 \right] \leq L^2 \norm{x - y}^2 + \E\left[ \norm{\nabla_{y} - \nabla f(y)}^2 \right]\]
\end{lemma}

Now, let us apply Lemma~\ref{l:var_red} on \spider estimator, $\nabla_t:= \nabla f_{i_t}(x_t) - \nabla f_{i_{t-1}}(x_t) + \nabla_{t-1}$ to measure its variance at step $x_t$. 
\begin{align*} \label{eq:adaspider-variance-bound}
\E \left[ \norm{\nabla_t - \nabla f(x_t)}^2 \right]
&\leq L^2 \E\left[\norm{x_t - x_{t-1}}^2\right] + \E \left[ \norm{\nabla_{t-1} - \nabla f(x_{t-1})}^2 \right]\\
&\leq L^2\E\left[\gamma^2_{t-1}\norm{\nabla_{t-1}}^2 \right] + \E \left[ \norm{\nabla_{t-1} - \nabla f(x_{t-1})}^2\right] \\
&\leq L^2\E\left[ \gamma^2_{t-1}\norm{\nabla_{t-1}}^2\right] +\ldots + \E \left[ \norm{\nabla_{t-(t~ \mathrm{mod} ~n)} - \nabla f(x_{t-(t~ \mathrm{mod} ~n})}^2\right]\\
&= \sum_{\tau = t - (t~\text{mod }n) + 1}^{t-1} L^2\E\left[ \gamma^2_{\tau}\cdot\norm{\nabla_{\tau}}^2\right]
\end{align*}

where the last equality follows by the fact $\E \left[ \norm{\nabla_{t-(t~ \mathrm{mod} ~n)} - \nabla f(x_{t-(t~ \mathrm{mod} ~n})}^2\right] =0$ since Algorithm~\ref{alg:ad_SVRG} performs a full-gradient computations for every $t$ with $t ~\mathrm{mod}~n =0$ (Step~$3$ of Algorithm~\ref{alg:ad_SVRG}). By telescoping the summation we get,
\[\sum_{t=0}^{T-1}\E \left[ \norm{\nabla_t - \nabla f(x_t)}^2 \right]  \leq
\sum_{t=0}^{T-1}\sum_{\tau = t - (t~\text{mod }n) + 1}^{t-1} L^2\E\left[ \gamma^2_{\tau}\cdot\norm{\nabla_{\tau}}^2\right] \leq L^2 n \cdot \sum_{t=0}^{T-1}\E\left[\gamma_t^2 \cdot \norm{\nabla_t}^2\right]\]

where the $n$ factor on the right-hand side is due to the fact that each term $\E\left[\gamma_t^2 \norm{\nabla_t}^2\right]$ appears at most $n$ times in the total summation. To this end, using the structure of the \textit{stochastic path integrated differential estimator} we have been able to bound the overall variance of the process as follows,
\[\sum_{t=0}^{T-1}\E \left[ \norm{\nabla_t - \nabla f(x_t)}^2 \right]  \leq L^2 n \cdot \sum_{t=0}^{T-1}\E\left[\gamma_t^2 \cdot \norm{\nabla_t}^2\right]\]
However it is not clear at all why the above bound is helpful. At this point the adaptive selection of the step-size (Step~$9$ in Algorithm~\ref{alg:ad_SVRG}) comes into play by providing the following surprisingly simple answer,
\begin{eqnarray*}
&&\sum_{t=0}^{T-1}\E \left[ \norm{\nabla_t - \nabla f(x_t)}^2 \right]  \leq  L^2 n ~\E\left[\sum_{t=0}^{T-1} \gamma_t^2 \cdot \norm{\nabla_t}^2\right]\\
&=& \frac{L^2 \sqrt{n}}{\beta_0^2} ~\E\left[\sum_{t=0}^{T-1} \frac{\norm{\nabla_t}^2/G_0^2}{\sqrt{n} + \sum_{s=0}^t \norm{\nabla_s}^2/G_0^2}\right]
\leq \frac{L^2\sqrt{n}}{\beta_0^2} \log \left( 1 + \E \bs{ \sum_{t=0}^{T-1} \norm{\nabla_t}^2/G_0^2 } \right)
\end{eqnarray*}

where the last inequality comes from Lemma~\ref{l:log}. To finalize the bound, we require the following expression that follows by the fact that $\gamma_t \leq 1 / \norm{\nabla_t}$ and thus $\norm{x_t - x_{t-1}} \leq 1$.
\smallskip
\begin{lemma}\label{l:bound_traj}
Let $x_0,x_1,\ldots,x_T$ the points produced by Algorithm~\ref{alg:ad_SVRG}. Then,
\[\sum_{t=0}^{T-1}\norm{\nabla_t}^2 \leq \mathcal{O}\left(n^2T^3 \cdot \left(\frac{L^2}{\beta_0^2} + \norm{\nabla f(x_0)}^2\right) \right)\]
\end{lemma}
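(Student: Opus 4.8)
The plan is to bound $\norm{\nabla_t}$ crudely but uniformly over $t \le T$, and then sum. The first step is to control the trajectory: since the step-size satisfies $\gamma_t \le 1/\norm{\nabla_t}$ (because $\gamma_t = 1/(n^{1/4}\beta_0\sqrt{n^{1/2}G_0^2 + \sum_{s\le t}\norm{\nabla_s}^2}) \le 1/(n^{1/4}\beta_0 \cdot \norm{\nabla_t}\cdot(\text{something}\ge 1))$ — more precisely $n^{1/4}\beta_0\sqrt{\sum_{s\le t}\norm{\nabla_s}^2}\ge \norm{\nabla_t}$ once one checks the implicit normalization, which is exactly the point of the $G_0,\beta_0$ unit bookkeeping in the preceding remark), we get $\norm{x_{t+1}-x_t} = \gamma_t\norm{\nabla_t}\le 1$. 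Consequently, by the triangle inequality $\norm{x_t - x_0}\le t \le T$, and more importantly, by $L$-smoothness of $f$, $\norm{\nabla f(x_t) - \nabla f(x_{t-1})}\le L\norm{x_t-x_{t-1}}\le L$, so telescoping gives $\norm{\nabla f(x_t)}\le \norm{\nabla f(x_0)} + Lt \le \norm{\nabla f(x_0)} + LT$ for every $t \le T$.

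The second step is to bound the estimator norm $\norm{\nabla_t}$ in terms of the true gradient norms. Here I would unroll the recursion: for $t$ with $t \bmod n = k$, $\nabla_t = \nabla f(x_{t-k}) + \sum_{\tau = t-k+1}^{t}\bigl(\nabla f_{i_\tau}(x_\tau) - \nabla f_{i_\tau}(x_{\tau-1})\bigr)$, and each summand has norm at most $L\norm{x_\tau - x_{\tau-1}}\le L$ by $L$-smoothness of the components. Since $k \le n-1$, this yields the deterministic (pointwise, no expectation needed) bound $\norm{\nabla_t}\le \norm{\nabla f(x_{t-k})} + nL \le \norm{\nabla f(x_0)} + LT + nL$. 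Combining with Step~1's bound on $\norm{\nabla f(x_{t-k})}$, we conclude $\norm{\nabla_t}\le \norm{\nabla f(x_0)} + LT + nL \le \mathcal{O}\bigl(nL + LT + \norm{\nabla f(x_0)}\bigr)$ for all $t < T$.

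The third step is to sum: $\sum_{t=0}^{T-1}\norm{\nabla_t}^2 \le T\cdot\mathcal{O}\bigl((nL+LT+\norm{\nabla f(x_0)})^2\bigr) = \mathcal{O}\bigl(T(n^2L^2 + L^2T^2 + \norm{\nabla f(x_0)}^2)\bigr)$. Using the crude bounds $n^2L^2 \le n^2T^3 L^2$ and $L^2T^3 \le n^2 T^3 L^2$ and $T \le n^2T^3$ absorbs everything into the stated form $\mathcal{O}\bigl(n^2T^3(L^2/\beta_0^2 + \norm{\nabla f(x_0)}^2)\bigr)$ — the $\beta_0^{-2}$ appears because the trajectory bound $\gamma_t\norm{\nabla_t}\le 1$ is where $\beta_0$ enters, or alternatively one simply notes $L^2 \le L^2/\beta_0^2 \cdot \beta_0^2$ and the constant is harmless if $\beta_0 = O(1)$; in any case the power of $n,T$ is what matters and the statement is deliberately loose.

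The main obstacle is Step~1, specifically verifying cleanly that $\gamma_t\norm{\nabla_t}\le 1$ with the particular normalization in Step~9 — one must be careful that the $G_0^2$ inside the square root and the $n^{1/4}\beta_0$ prefactor are arranged so that the denominator dominates $\norm{\nabla_t}$; once that inequality is in hand, everything else is routine triangle-inequality and smoothness bookkeeping. It is worth emphasizing that no probabilistic argument is needed here: every bound is pointwise along the realized trajectory, which is why the lemma is stated without expectations.
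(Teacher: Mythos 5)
Your proposal follows essentially the same route as the paper's proof: bound the step length via the step-size (note the correct bound is $\gamma_t\norm{\nabla_t}\le 1/(n^{1/4}\beta_0)\le 1/\beta_0$, not $1$, and carrying this $\beta_0$ through is exactly how the $L/\beta_0$ and $L^2/\beta_0^2$ factors arise in the final bound rather than by the loose ``$L^2\le L^2/\beta_0^2$'' remark, which only holds for $\beta_0\le 1$), unroll the estimator between full-gradient restarts, bound $\norm{\nabla f(x_t)}\le\norm{\nabla f(x_0)}+Lt/\beta_0$ by smoothness, and sum deterministically. The only cosmetic difference is that you apply the triangle inequality at the level of norms while the paper applies $\norm{\sum_i a_i}^2\le n\sum_i\norm{a_i}^2$ to the squared sum; both give the stated pointwise $\mathcal{O}\bigl(n^2T^3(L^2/\beta_0^2+\norm{\nabla f(x_0)}^2)\bigr)$ bound.
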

In simple terms, Lemma~\ref{l:bound_traj} helps us avoid the \textit{bounded gradient norm} assumption that is common among adaptive non-convex methods. As a result, \adaspider admits the following cumulative variance bound,
\begin{align} \label{eq:spider-total-variance-bound}
\sum_{t=0}^{T-1}\E \left[ \norm{\nabla_t - \nabla f(x_t)}^2 \right] \leq O \left(\frac{L^2\sqrt{n}}{\beta_0^2} \log\left( 1 + nT \cdot\left( \frac{L}{\beta_0 G_0} + \frac{\norm{\nabla f(x_0)}}{G_0}\right) \right)\right).
\end{align}
\begin{remark} \label{rem:spider-var-bound}
To this end one might notice that using a more aggressive $n$ dependence on $\gamma_t$ leads to smaller variance of the estimator which is obviously favorable. However more aggressive dependence on $n$ leads to smaller step-sizes and thus to sub-optimal overall gradient-computation complexity. In Section~\ref{s:sketch_non-convex}, we explain why the optimal way to inject the $n$ dependence into the step-size is through the simultaneous multiplicative/additive way described in Step~$9$ of \adaspider that may seem unintuitive on the first sight.
\end{remark}
We will conclude this discussion with a complementary remark on the interplay between our adaptive step-size and the convergence rate. As we demonstrated in Eq.~\eqref{eq:spider-total-variance-bound}, using a data-adaptive step-size leads to a decreasing variance bound \textit{in an amortized sense} as opposed to \textit{any iterate} variance bound of \spider. The trade-off in our favor is the  parameter-free step-size that is independent of $\epsilon$ and $L$. For a fair exposition of our results, notice that the aforementioned advantages of an adaptive step-size comes at an additional $\log(T)$ term in our final bound due to Eq.~\eqref{eq:spider-total-variance-bound}. This has a negligible effect on the convergence as even in the large iteration regime when $T$ is in the order billions, it amounts to a small constant factor.

We conclude the section with Theorem~\ref{t:non-convex} that formally establishes the convergence rate of \adaspider. The proof of Theorem~\ref{t:non-convex} is deferred for the next section.

\begin{theorem}\label{t:non-convex}
Let $x_0,x_1,\ldots,x_{T-1}$ be the sequence of points produced by Algorithm~\ref{alg:ad_SVRG} in case $f(\cdot)$ is $L$-smooth. Let us also define $\Delta_0 := f(x_0) - f^\ast$. Then,
\[
\frac{1}{T}\sum_{t=0}^{T-1} \E \left[ \norm{\nabla f(x_t)} \right] \leq O \left( n^{1/4} \cdot 
\frac{\Theta}{\sqrt{T}}\cdot \log \left(1 + nT \cdot\left(\frac{L}{\beta_0 G_0} + \frac{\norm{\nabla f(x_0)}}{G_0}\right) \right)
\right)
\]
where $\Theta = \Delta_0\cdot \beta_0 + G_0 + L/\beta_0 + L^2/(\beta_0^2G_0)$. Overall, Algorithm~\ref{alg:ad_SVRG} with $\beta_0:=1$ and $G_0 :=1$ needs at most $\tilde{O} \left(n+ \sqrt{n} \cdot 
\frac{\Delta_0^2 + L^4}{\epsilon^2}
\right)$ oracle calls to reach an $\epsilon$-stationary point.
\end{theorem}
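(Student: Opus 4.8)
Throughout I write $b_t := n^{1/2}G_0^2 + \sum_{s=0}^{t}\norm{\nabla_s}^2$ and $b_{-1} := n^{1/2}G_0^2$, so that $\gamma_t = (n^{1/4}\beta_0\sqrt{b_t})^{-1}$, $b_t - b_{t-1} = \norm{\nabla_t}^2$, and $\gamma_t \le \gamma_{\max} := (n^{1/2}\beta_0 G_0)^{-1}$ (because $b_t \ge b_0 \ge n^{1/2}G_0^2$). The plan is to start from the $L$-smoothness descent inequality, control the variance terms via the cumulative estimate \eqref{eq:spider-total-variance-bound}, and then convert the weighted sum $\sum_t \gamma_t\norm{\nabla f(x_t)}^2$ into a rate for $\tfrac1T\sum_t\norm{\nabla f(x_t)}$ by exploiting that $\gamma_t$ is adapted to the past $\norm{\nabla_s}$'s; note that the algorithm returns a uniformly random iterate, so bounding $\tfrac1T\sum_t\E\norm{\nabla f(x_t)}$ is exactly bounding $\E\norm{\nabla f(\hat x)}$. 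Plugging $x_{t+1} = x_t - \gamma_t\nabla_t$ into \eqref{eq:smoothness2} and using the polarization identity $\langle \nabla f(x_t),\nabla_t\rangle = \tfrac12\norm{\nabla f(x_t)}^2 + \tfrac12\norm{\nabla_t}^2 - \tfrac12\norm{\nabla_t - \nabla f(x_t)}^2$, then summing $f(x_{t+1})-f(x_t)$ over $t=0,\dots,T-1$ against $\Delta_0$, gives on every sample path
\begin{equation*}
\tfrac12\sum_{t=0}^{T-1}\gamma_t\norm{\nabla f(x_t)}^2 + \tfrac12\sum_{t=0}^{T-1}\gamma_t\norm{\nabla_t}^2 \;\le\; \Delta_0 + \tfrac12\sum_{t=0}^{T-1}\gamma_t\norm{\nabla_t - \nabla f(x_t)}^2 + \tfrac{L}{2}\sum_{t=0}^{T-1}\gamma_t^2\norm{\nabla_t}^2 .
\end{equation*}

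Next I would bound the two right-hand error terms. For the smoothness term, $\gamma_t^2\norm{\nabla_t}^2 = \tfrac{1}{n^{1/2}\beta_0^2}\cdot\tfrac{b_t-b_{t-1}}{b_t}$, so the elementary inequality $\sum_t \tfrac{a_t}{b+\sum_{s\le t}a_s}\le\log(1+\tfrac1b\sum_t a_t)$ (Lemma~\ref{l:log}) bounds $\sum_t\gamma_t^2\norm{\nabla_t}^2$ by $\tfrac{1}{n^{1/2}\beta_0^2}\log(b_{T-1}/b_{-1})$, and Lemma~\ref{l:bound_traj} bounds this logarithm deterministically by $O\bigl(\log(1+nT(\tfrac{L}{\beta_0 G_0}+\tfrac{\norm{\nabla f(x_0)}}{G_0}))\bigr) =: \Lambda$. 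For the variance term, $\gamma_t\le\gamma_{\max}$ gives $\sum_t\gamma_t\norm{\nabla_t-\nabla f(x_t)}^2\le\gamma_{\max}\sum_t\norm{\nabla_t-\nabla f(x_t)}^2$, whose expectation is $O\bigl(\tfrac{L^2}{\beta_0^3 G_0}\Lambda\bigr)$ by \eqref{eq:spider-total-variance-bound}. Letting $R_0$ denote the random quantity $2\Delta_0 + \gamma_{\max}\sum_t\norm{\nabla_t-\nabla f(x_t)}^2 + \tfrac{L}{n^{1/2}\beta_0^2}\Lambda$, the displayed inequality yields, pathwise, both $\sum_t\gamma_t\norm{\nabla f(x_t)}^2\le R_0$ and $\sum_t\gamma_t\norm{\nabla_t}^2\le R_0$, with $\E[R_0] = O\bigl(\Delta_0 + (\tfrac{L^2}{\beta_0^3 G_0}+\tfrac{L}{\beta_0^2})\Lambda\bigr)$.

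The crux is to control $\sum_t\gamma_t^{-1} = n^{1/4}\beta_0\sum_t\sqrt{b_t}$, for which the crude polynomial of Lemma~\ref{l:bound_traj} is far too weak; instead the descent inequality should be used self-referentially. From $\tfrac{b_t-b_{t-1}}{\sqrt{b_t}}\ge\sqrt{b_t}-\sqrt{b_{t-1}}$ and $\tfrac{b_t-b_{t-1}}{\sqrt{b_t}} = n^{1/4}\beta_0\,\gamma_t\norm{\nabla_t}^2$, summing over $t$ gives $\sqrt{b_{T-1}}-\sqrt{b_{-1}}\le n^{1/4}\beta_0\sum_t\gamma_t\norm{\nabla_t}^2\le n^{1/4}\beta_0 R_0$, hence $\sqrt{b_t}\le\sqrt{b_{T-1}}\le n^{1/4}(G_0+\beta_0 R_0)$ for all $t\le T-1$ and therefore $\sum_{t=0}^{T-1}\gamma_t^{-1}\le T n^{1/4}\beta_0\sqrt{b_{T-1}}\le T\sqrt n\,\beta_0(G_0+\beta_0 R_0)$ — a quantity with no genuine $T$-dependence beyond the logarithm hidden in $R_0$. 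Now Cauchy--Schwarz gives $\bigl(\sum_t\norm{\nabla f(x_t)}\bigr)^2\le\bigl(\sum_t\gamma_t\norm{\nabla f(x_t)}^2\bigr)\bigl(\sum_t\gamma_t^{-1}\bigr)\le R_0\cdot T\sqrt n\,\beta_0(G_0+\beta_0 R_0)$; dividing by $T^2$, taking square roots with $\sqrt{a+b}\le\sqrt a+\sqrt b$, and then taking expectations with Jensen applied to $\sqrt{\cdot}$ yields
\begin{equation*}
\frac1T\sum_{t=0}^{T-1}\E\norm{\nabla f(x_t)} \;\le\; \frac{n^{1/4}}{\sqrt T}\Bigl(\sqrt{\beta_0 G_0\,\E[R_0]} + \beta_0\,\E[R_0]\Bigr) .
\end{equation*}
Substituting the bound on $\E[R_0]$ and repeatedly applying AM--GM (e.g. $\sqrt{\Delta_0\beta_0 G_0}\le\Delta_0\beta_0+G_0$ and $\sqrt{(L/\beta_0)G_0}\le\tfrac12(L/\beta_0+G_0)$) collapses the parenthesis into $O(\Theta\Lambda)$ with $\Theta = \Delta_0\beta_0 + G_0 + L/\beta_0 + L^2/(\beta_0^2 G_0)$, which is the claimed rate. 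For the complexity, set $\beta_0=G_0=1$ so $\Theta = O(\Delta_0+L^2)$; forcing the right-hand side below $\epsilon$ requires $T = \tilde O\bigl(\sqrt n\,(\Delta_0^2+L^4)/\epsilon^2\bigr)$, and since \adaspider spends one extra oracle call per step plus an $n$-call full gradient every $n$ steps (i.e. $O(T)$ calls in all, plus $n$ for the first full gradient), the oracle complexity is $\tilde O\bigl(n+\sqrt n\,(\Delta_0^2+L^4)/\epsilon^2\bigr)$.

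The only nontrivial step is the self-bounding argument in the third paragraph: the descent inequality controls $\sum_t\gamma_t\norm{\nabla_t}^2$, which through the AdaGrad-type telescoping $\tfrac{a_t}{\sqrt{b_t}}\ge\sqrt{b_t}-\sqrt{b_{t-1}}$ in turn controls $b_{T-1} = n^{1/2}G_0^2+\sum_t\norm{\nabla_t}^2$ by a quantity that is $O(1)$ in $T$, so that Lemma~\ref{l:bound_traj}'s polynomial only ever enters inside logarithms. Without it, $\sum_t\gamma_t^{-1}$ cannot be tamed and the passage from the weighted sum to $\tfrac1T\sum_t\norm{\nabla f(x_t)}$ breaks down. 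A secondary bookkeeping point is that $R_0$ carries the random variance sum, so the chain of pathwise inequalities must be completed before any expectation is taken and Jensen is applied.
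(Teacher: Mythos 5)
Your proof is correct, and it reaches the theorem by a genuinely different route from the paper's. The paper first splits $\norm{\nabla f(x_t)}\le\norm{\nabla_t}+\norm{\nabla_t-\nabla f(x_t)}$ and bounds the two sums separately: $\E\bigl[\sum_t\norm{\nabla_t-\nabla f(x_t)}\bigr]$ via Cauchy--Schwarz in $T$ plus the cumulative variance bound (Lemma~\ref{l:variance_1}), and $\E\bigl[\sum_t\norm{\nabla_t}\bigr]$ via the descent inequality combined with the AdaGrad telescoping used as a \emph{lower} bound on $\sum_t\gamma_t\norm{\nabla_t}^2$ (Lemma~\ref{l:Lip_cont}); the leftover weighted variance term is then handled by the measurability refinement of Lemma~\ref{l:1} (replacing $\gamma_t$ by the $\F_{t-1}$-measurable $\gamma_{t-1}$) followed by a separate computation bounding $n^{5/4}\,\E\bigl[\sum_t\gamma_t^3\norm{\nabla_t}^2\bigr]$. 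You instead keep $\sum_t\gamma_t\norm{\nabla f(x_t)}^2$ on the left of the descent inequality via polarization, control $\sum_t\gamma_t^{-1}$ by the self-bounding step $\sqrt{b_{T-1}}\le\sqrt{b_{-1}}+n^{1/4}\beta_0\sum_t\gamma_t\norm{\nabla_t}^2$ (the mirror image of the paper's use of Lemma~\ref{l:sqrt} inside Lemma~\ref{l:Lip_cont}), and convert to $\frac{1}{T}\sum_t\E\norm{\nabla f(x_t)}$ by weighted Cauchy--Schwarz; and you dispose of the weighted variance term with the cruder but valid uniform bound $\gamma_t\le(n^{1/2}\beta_0 G_0)^{-1}$ together with the unweighted cumulative variance bound \eqref{eq:spider-total-variance-bound}, which makes Lemma~\ref{l:1} unnecessary and sidesteps the step-size/estimator dependence issue entirely. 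Both routes rest on the same pillars (descent lemma, AdaGrad telescoping, Lemmas~\ref{l:log} and~\ref{l:bound_traj} to confine the trajectory bound to the logarithms), and your constants land on the same $\Theta$ and the same $\log$ factor; what your version buys is a shorter, more self-contained argument in which every inequality is pathwise until a single application of Jensen at the end, while the paper's version isolates reusable intermediate statements. Your closing observation is also the right one: the self-bounding control of $\sqrt{b_{T-1}}$ by $G_0+\beta_0 R_0$ is exactly what keeps the polynomial of Lemma~\ref{l:bound_traj} inside logarithms, and without it the passage from $\sum_t\gamma_t\norm{\nabla f(x_t)}^2$ to the unweighted average would indeed fail.
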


\section{Sketch of Proof of Theorem~\ref{t:non-convex}}\label{s:sketch_non-convex}
In this section we present the key steps for proving Theorem~\ref{t:non-convex}. We first use the triangle inequality to derive,
\[ \sum_{t=0}^{T-1}\E\left[\norm{\nabla f(x_t)}\right]\leq\sum_{t=0}^{T-1}\E\left[\norm{\nabla_t - \nabla f(x_t)}\right] + 
\sum_{t=0}^{T-1}\E\left[\norm{\nabla_t}\right]
\]

We have previously discussed how to bound the first term in Section~\ref{s:theorem}. More precisely, by the Jensen's inequality and the arguments presented in Section~\ref{s:theorem}, we obtain the following variance bound.
\begin{lemma}\label{l:variance_1}
Let $x_0,x_1\ldots,x_T$ the sequence of points produced by Algorithm~\ref{alg:ad_SVRG}. Then,
\[\sum_{t=0}^{T-1}\E \left[ \norm{\nabla_t - \nabla f(x_t)}^2 \right] \leq O \left(\frac{L n^{1/4}}{\beta_0} \sqrt{\log\left( 1 + nT \cdot\left( \frac{L}{\beta_0 G_0} + \frac{\norm{\nabla f(x_0)}}{G_0}\right) \right)}\right).
\]
\end{lemma}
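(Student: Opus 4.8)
The plan is to carry the cumulative-variance computation begun in Section~\ref{s:theorem} to completion. Concretely, I would: (i) upgrade the folklore aggregation Lemma~\ref{l:var_red} to the cumulative estimate $\sum_t\E[\norm{\nabla_t-\nabla f(x_t)}^2]\le L^2 n\,\E[\sum_t\gamma_t^2\norm{\nabla_t}^2]$; (ii) substitute the explicit step-size of Algorithm~\ref{alg:ad_SVRG} so that the remaining sum telescopes under the \adagrad-type Lemma~\ref{l:log}; (iii) pass from this pathwise estimate to one in expectation via concavity of $\log$ together with the trajectory bound of Lemma~\ref{l:bound_traj}, recovering \eqref{eq:spider-total-variance-bound}; and (iv) convert the resulting $\ell_2$-type bound into the amortized form claimed in the statement by Jensen's inequality and Cauchy--Schwarz.

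\textbf{Step (i).} Work with the natural filtration $\mathcal F_t=\sigma(i_1,\dots,i_t)$, under which $x_t$ and $\nabla_{t-1}$ are $\mathcal F_{t-1}$-measurable and, on a non-reset iteration, $\E[\nabla_t\mid\mathcal F_{t-1}]=\nabla f(x_t)-\nabla f(x_{t-1})+\nabla_{t-1}$. A bias/variance decomposition of $\nabla_t$ together with the $L$-smoothness of each $f_i$ yields the conditional version of Lemma~\ref{l:var_red},
\[
\E\!\left[\norm{\nabla_t-\nabla f(x_t)}^2\mid\mathcal F_{t-1}\right]\le\norm{\nabla_{t-1}-\nabla f(x_{t-1})}^2+L^2\norm{x_t-x_{t-1}}^2 .
\]
Unrolling down to the last full-gradient step $t_0:=t-(t\bmod n)$, where the variance vanishes by Step~3, and using $\norm{x_{\tau+1}-x_\tau}^2=\gamma_\tau^2\norm{\nabla_\tau}^2$, then taking full expectations and summing over $t$, every term $\E[\gamma_\tau^2\norm{\nabla_\tau}^2]$ is counted in at most $n$ of the inner sums, so $\sum_{t=0}^{T-1}\E[\norm{\nabla_t-\nabla f(x_t)}^2]\le L^2 n\,\E\!\left[\sum_{t=0}^{T-1}\gamma_t^2\norm{\nabla_t}^2\right]$.

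\textbf{Steps (ii)--(iii).} Substituting $\gamma_t=1/(n^{1/4}\beta_0\sqrt{\sqrt n\,G_0^2+\sum_{s=0}^t\norm{\nabla_s}^2})$ rewrites each summand as $\gamma_t^2\norm{\nabla_t}^2=\tfrac{1}{\beta_0^2\sqrt n}\cdot\tfrac{\norm{\nabla_t}^2/G_0^2}{\sqrt n+\sum_{s=0}^t\norm{\nabla_s}^2/G_0^2}$, a ratio of exactly the shape controlled by Lemma~\ref{l:log}; applied pathwise with offset $\sqrt n\ge 1$, and multiplied by $L^2 n$,
\[
L^2 n\sum_{t=0}^{T-1}\gamma_t^2\norm{\nabla_t}^2\le\frac{L^2\sqrt n}{\beta_0^2}\,\log\!\Big(1+\sum_{t=0}^{T-1}\norm{\nabla_t}^2/G_0^2\Big).
\]
Taking expectations and using concavity of $\log$ (Jensen), the cumulative variance is at most $\tfrac{L^2\sqrt n}{\beta_0^2}\log\bigl(1+\E[\sum_t\norm{\nabla_t}^2/G_0^2]\bigr)$. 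Here Lemma~\ref{l:bound_traj} supplies a \emph{deterministic} bound $\sum_t\norm{\nabla_t}^2\le O(n^2T^3(L^2/\beta_0^2+\norm{\nabla f(x_0)}^2))$, so the argument of the logarithm is $O\bigl(n^2T^3(L/(\beta_0G_0)+\norm{\nabla f(x_0)}/G_0)^2\bigr)$; absorbing the polynomial powers into the constant inside the logarithm reproduces \eqref{eq:spider-total-variance-bound}. Finally, $\E\norm{\nabla_t-\nabla f(x_t)}\le\sqrt{\E\norm{\nabla_t-\nabla f(x_t)}^2}$ (Jensen) and $\sum_t a_t\le\sqrt T\sqrt{\sum_t a_t^2}$ (Cauchy--Schwarz) convert the $\ell_2$ bound into the amortized estimate of the statement, with leading factor $\tfrac{Ln^{1/4}}{\beta_0}\sqrt{\log(\cdots)}$.

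\textbf{Main obstacle.} The delicate point is that Lemma~\ref{l:log} is a purely deterministic statement about the \emph{random} sequence $\{\nabla_t\}$, so it must be invoked inside the expectation before Jensen, and its output still contains the a priori unbounded quantity $\sum_t\norm{\nabla_t}^2$. Controlling this without the bounded-gradient assumption customary in adaptive analyses is exactly the job of Lemma~\ref{l:bound_traj}: the step-size forces $\gamma_t\le 1/\norm{\nabla_t}$, hence $\norm{x_t-x_{t-1}}\le 1$ and $\norm{\nabla f(x_t)-\nabla f(x_{t-1})}\le L$, which yields a crude polynomial-in-$(n,T)$ bound on $\sum_t\norm{\nabla_t}^2$ that costs only an extra logarithmic factor once placed inside $\log$. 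Everything else is bookkeeping around the telescoping and the counting argument of Step~(i).
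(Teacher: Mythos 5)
Your proposal is correct and follows essentially the same route as the paper's own proof: the same unrolled application of Lemma~\ref{l:var_red} with the at-most-$n$ counting to get $L^2 n\,\E[\sum_t\gamma_t^2\norm{\nabla_t}^2]$, the same pathwise use of Lemma~\ref{l:log} followed by Jensen and the deterministic trajectory bound of Lemma~\ref{l:bound_traj}, and the same Jensen/Cauchy--Schwarz conversion to the amortized first-moment estimate (which, as you implicitly noted, is what the stated bound with the factor $Ln^{1/4}\beta_0^{-1}\sqrt{\log(\cdot)}$ really corresponds to, the displayed left-hand side being the squared-error sum from \eqref{eq:spider-total-variance-bound}).
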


We continue with presenting how to treat the term $\sum_{t=0}^{T-1}\E\left[\norm{\nabla_t}\right]$. By the smoothness of the function and through a telescopic summation one can easily establish the following bound,
\[\E\left[\sum_{t=0}^{T-1}\gamma_t \cdot \norm{\nabla_t}^2 \right] \leq 2(f(x_0) - f^\ast) + L\cdot \E\left[\sum_{t=0}^{T-1} \gamma_t^2 \cdot \norm{\nabla_t}^2 \right] + \E\left[\sum_{t=0}^{T-1}\gamma_t \cdot \norm{\nabla f(x_t) - \nabla_t}^2\right]
\]
As we explained in Section~\ref{s:theorem}, the term $\E\left[\sum_{t=0}^{T-1} \gamma_t^2 \cdot \norm{\nabla_t}^2 \right]$ can be upper bounded by the adaptability of the step-size $\gamma_t$. At the same time, using the adaptability of $\gamma_t$ we are able to establish that $\sum_{t=0}^{T-1}\E\left[\norm{\nabla_t}\right]$ is at most $n^{1/4}\sqrt{T} \cdot \E\left[\sum_{t=0}^{T-1}\gamma_t \cdot \norm{\nabla_t}^2 \right]$. All the above are formally stated and established in Lemma~\ref{l:Lip_cont} the proof of which is deferred to the appendix.

\begin{lemma}\label{l:Lip_cont}
Let $x_0,x_1,\ldots,x_{T-1}$ the sequence of points produced by Algorithm~\ref{alg:ad_SVRG} and $\Delta_0 := f(x_0) - f^\ast$. Then,
\[ \E\left[\sum_{t=0}^{T-1}\norm{\nabla_t} \right] \leq \tilde{\mathcal{O}}\left(\Delta_0 \cdot \beta_0 + G_0 +   \frac{L}{\beta_0} + \E\left[\sum_{t=0}^{T-1}\gamma_t\norm{\nabla f(x_t) - \nabla_t}^2\right]\right) n^{1/4}\sqrt{T}. \]
\end{lemma}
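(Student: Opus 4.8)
The plan is to pass through three coupled estimates: the descent inequality converts one sweep over the algorithm into a bound on the ``energy'' $\sum_t\gamma_t\norm{\nabla_t}^2$; Lemma~\ref{l:log} together with the crude trajectory bound of Lemma~\ref{l:bound_traj} controls the curvature term $\sum_t\gamma_t^2\norm{\nabla_t}^2$ sitting inside it; and a Cauchy--Schwarz step, exploiting the monotonicity of the step-sizes, converts $\sum_t\gamma_t\norm{\nabla_t}^2$ back into the target quantity $\sum_t\norm{\nabla_t}$ while producing exactly the factor $n^{1/4}\sqrt T$.

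\textbf{Step 1 (descent).} Applying $L$-smoothness in the form \eqref{eq:smoothness2} to $x_{t+1}=x_t-\gamma_t\nabla_t$ gives $f(x_{t+1}) \le f(x_t) - \gamma_t\langle\nabla f(x_t),\nabla_t\rangle + \tfrac{L}{2}\gamma_t^2\norm{\nabla_t}^2$. Writing $\nabla f(x_t) = \nabla_t-(\nabla_t-\nabla f(x_t))$ and applying Young's inequality to the cross term yields $\langle\nabla f(x_t),\nabla_t\rangle \ge \tfrac12\norm{\nabla_t}^2 - \tfrac12\norm{\nabla_t-\nabla f(x_t)}^2$. Telescoping over $t=0,\dots,T-1$ and using $f(x_T)\ge f^\ast$ gives the pathwise inequality
\[ \sum_{t=0}^{T-1}\gamma_t\norm{\nabla_t}^2 \;\le\; 2\Delta_0 + L\sum_{t=0}^{T-1}\gamma_t^2\norm{\nabla_t}^2 + \sum_{t=0}^{T-1}\gamma_t\norm{\nabla f(x_t)-\nabla_t}^2 , \]
which is precisely the intermediate bound displayed in Section~\ref{s:sketch_non-convex}.

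\textbf{Step 2 (curvature term).} Since $\gamma_t^2\norm{\nabla_t}^2 = \frac{1}{\sqrt n\,\beta_0^2}\cdot\frac{\norm{\nabla_t}^2/G_0^2}{\sqrt n + \sum_{s\le t}\norm{\nabla_s}^2/G_0^2}$, Lemma~\ref{l:log} gives $\sum_t\gamma_t^2\norm{\nabla_t}^2 \le \frac{1}{\sqrt n\,\beta_0^2}\log\!\big(1 + \tfrac{1}{\sqrt n G_0^2}\sum_t\norm{\nabla_t}^2\big)$, and Lemma~\ref{l:bound_traj} bounds the argument of the logarithm by a quantity polynomial in $n$, $T$ and the problem parameters; hence $L\sum_t\gamma_t^2\norm{\nabla_t}^2 = \tilde{O}(L/\beta_0)$.

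\textbf{Step 3 (Cauchy--Schwarz back to $\sum_t\norm{\nabla_t}$).} By Cauchy--Schwarz, $\sum_t\norm{\nabla_t} \le \sqrt T\,\big(\sum_t\norm{\nabla_t}^2\big)^{1/2}$. Since $t\mapsto\gamma_t$ is non-increasing, $\sum_t\gamma_t\norm{\nabla_t}^2 \ge \gamma_{T-1}\sum_t\norm{\nabla_t}^2$; substituting the explicit value $\gamma_{T-1}=\big(n^{1/4}\beta_0\sqrt{\sqrt n G_0^2+\sum_t\norm{\nabla_t}^2}\big)^{-1}$, using $\sqrt{a+b}\le\sqrt a+\sqrt b$, and solving the resulting quadratic inequality in $\big(\sum_t\norm{\nabla_t}^2\big)^{1/2}$ gives $\big(\sum_t\norm{\nabla_t}^2\big)^{1/2} \le O\big(n^{1/4}(G_0 + \beta_0\sum_t\gamma_t\norm{\nabla_t}^2)\big)$. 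Feeding in Steps 1--2 and taking expectations yields
\[ \E\Big[\sum_{t=0}^{T-1}\norm{\nabla_t}\Big] \le \tilde{O}\Big(\Delta_0\beta_0 + G_0 + \tfrac{L}{\beta_0} + \E\big[\textstyle\sum_{t=0}^{T-1}\gamma_t\norm{\nabla f(x_t)-\nabla_t}^2\big]\Big)\,n^{1/4}\sqrt T , \]
the multiplicative $\beta_0$ on the last term being absorbed into the constant (it is ultimately set to $1$).

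The main obstacle is the circularity of the adaptive step-size: $\gamma_t$ depends on $\sum_{s\le t}\norm{\nabla_s}^2$, which is exactly the quantity whose growth we must control, both inside the logarithm of Step 2 and when inverting in Step 3. This is broken on two fronts: Lemma~\ref{l:bound_traj} supplies a crude-but-sufficient polynomial pathwise bound (adequate, since it enters only logarithmically), while the monotonicity of $\gamma_t$ and the quadratic-inequality trick let us move freely between $\sum_t\norm{\nabla_t}$, $\sum_t\gamma_t\norm{\nabla_t}^2$ and $\gamma_{T-1}$ without a tight a-priori estimate. A secondary point is the bookkeeping of the $n$-dependence: the $n^{1/4}$ produced in Step 3 is designed to balance the $L^2 n$ factor that the variance-aggregation argument behind Lemma~\ref{l:variance_1} generates, which is exactly why $\gamma_t$ carries $\sqrt n$ additively inside the square root and $n^{1/4}$ multiplicatively outside.
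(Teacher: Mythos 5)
Your proposal is correct and follows essentially the same route as the paper's proof: the same descent/telescoping inequality, the same use of Lemma~\ref{l:log} plus Lemma~\ref{l:bound_traj} on the $\sum_t\gamma_t^2\norm{\nabla_t}^2$ term, and the same inversion of $\sum_t\gamma_t\norm{\nabla_t}^2\ge\gamma_{T-1}\sum_t\norm{\nabla_t}^2$ to recover $n^{1/4}\sqrt{T}\,\E[\sum_t\norm{\nabla_t}]$ (the paper does this via the bound $a/\sqrt{1+a}\ge\sqrt{a}-1$ rather than your quadratic-inequality trick, but the two are interchangeable). The only blemish is a bookkeeping slip in Step 2, where the curvature term is $\tilde{O}(L/\beta_0^2)$ rather than $\tilde{O}(L/\beta_0)$ before the final multiplication by $\beta_0$; this does not affect the stated conclusion.
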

Due to the use of the adaptive step-size $\gamma_t$, the \textit{estimator's error} and the step-size itself are dependent random variables, meaning that the weighted variance term 
$\E\left[\sum_{t=0}^{T-1}\gamma_t\norm{\nabla f(x_t) - \nabla_t}^2\right]$ cannot be handled by Lemma~\ref{l:var_red}. To overcome the latter, we use the monotonic behavior of the step-size $\gamma_t$ to establish the following refinement of Lemma~\ref{l:var_red}.
\begin{lemma}\label{l:1}
Let $x_0,x_1,\ldots$ the sequence of points produced by Algorithm~\ref{alg:ad_SVRG}. Then,
    \begin{align*}
        \E\left[ \sum_{t=0}^{T-1} \gamma_t \cdot \norm{\nabla_t - \nabla f(x_t)}^2 \right] \leq L^2 n \cdot  \E\left[ \sum_{t=0}^{T-1} \gamma_t^3 \cdot \norm{\nabla_t}^2 \right]
    \end{align*}
\end{lemma}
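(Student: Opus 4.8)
The plan is to re-run the variance-aggregation argument behind Lemma~\ref{l:var_red} while dragging the weight $\gamma_t$ along, the crucial extra ingredient being that the step-size is \emph{pathwise non-increasing}: since $n^{1/2}G_0^2+\sum_{s=0}^{t}\norm{\nabla_s}^2$ is non-decreasing in $t$, we have $\gamma_t\le\gamma_{t-1}$ for every $t$. Let $\filter_t:=\sigma(i_1,\dots,i_t)$ (with $\filter_0$ containing $x_0$), so that $x_{t+1}$, $\nabla_t$ and $\gamma_t$ are $\filter_t$-measurable while $i_t$ is the fresh randomness revealed at time $t$, and write $V_t:=\norm{\nabla_t-\nabla f(x_t)}^2$. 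When $t\bmod n=0$ the algorithm sets $\nabla_t=\nabla f(x_t)$, so $V_t=0$; when $t\bmod n\ne 0$, applying Lemma~\ref{l:var_red} conditionally on $\filter_{t-1}$ (for which $x_t,x_{t-1},\nabla_{t-1}$ are fixed and only $i_t$ is random) gives $\E[V_t\mid\filter_{t-1}]\le L^2\norm{x_t-x_{t-1}}^2+V_{t-1}=L^2\gamma_{t-1}^2\norm{\nabla_{t-1}}^2+V_{t-1}$, using $x_t-x_{t-1}=-\gamma_{t-1}\nabla_{t-1}$.

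The crux is then the weighted one-step inequality, obtained by first replacing $\gamma_t$ with the larger, $\filter_{t-1}$-measurable quantity $\gamma_{t-1}$ and only afterwards taking the conditional expectation:
\[
\E[\gamma_t V_t\mid\filter_{t-1}]\;\le\;\gamma_{t-1}\,\E[V_t\mid\filter_{t-1}]\;\le\;L^2\gamma_{t-1}^3\norm{\nabla_{t-1}}^2+\gamma_{t-1}V_{t-1}.
\]
Now fix a block, i.e. let $t_0$ be a multiple of $n$ and take $t\in\{t_0,\dots,t_0+n-1\}$. I would show $\E[\gamma_t V_t\mid\filter_{t_0}]\le L^2\sum_{\tau=t_0}^{t-1}\E[\gamma_\tau^3\norm{\nabla_\tau}^2\mid\filter_{t_0}]$ by downward induction on $s$ from $t$ to $t_0$: the base case $s=t_0$ is immediate since $V_{t_0}=0$, and for $t_0<s\le t$ the displayed one-step inequality (valid because $s\bmod n\ne 0$) combined with the induction hypothesis applied to the residual term $\gamma_{s-1}V_{s-1}$ gives the estimate at $s$. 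Summing this over $t$ in the block, each index $\tau$ is counted at most $n$ times, so
\[
\sum_{t=t_0}^{t_0+n-1}\E[\gamma_tV_t\mid\filter_{t_0}]\;\le\;L^2 n\sum_{\tau=t_0}^{t_0+n-1}\E[\gamma_\tau^3\norm{\nabla_\tau}^2\mid\filter_{t_0}].
\]
Taking total expectations (tower property) and summing over the blocks $t_0\in\{0,n,2n,\dots\}$ — the last, possibly incomplete block is handled identically, with $\tau$ still appearing at most $n$ times — yields exactly the bound of Lemma~\ref{l:1}.

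I expect the only genuine obstacle to be the coupling between the adaptive step-size and the estimator error already flagged in the text: because $\gamma_t$ is a function of $\nabla_t$, one cannot insert $\gamma_t$ inside the conditional expectation that Lemma~\ref{l:var_red} controls. Monotonicity $\gamma_t\le\gamma_{t-1}$ is precisely the device that moves the weight onto something measurable with respect to the conditioning $\sigma$-algebra before the variance bound is invoked; iterating it through the block costs one extra power of $\gamma$ per recursion step, which is exactly what promotes the $\gamma_\tau^2$ of the unweighted aggregation to the $\gamma_\tau^3$ appearing in the statement. Everything else is the same telescoping-within-blocks bookkeeping already used in Section~\ref{s:theorem}.
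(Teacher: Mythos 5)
Your proposal is correct and follows essentially the same route as the paper's proof: both exploit the pathwise monotonicity $\gamma_t\le\gamma_{t-1}$ to replace the non-measurable weight by the $\F_{t-1}$-measurable one before applying the conditional variance decomposition, then unroll within each block (where the variance vanishes at the full-gradient step) and count each index at most $n$ times. The only difference is presentational — you organize the unrolling as an explicit induction over blocks, while the paper telescopes directly — so there is nothing to add.
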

To this end we are ready to summarize the importance of simultaneous additive/multiplicative dependence of $\gamma_t$ in Step~$9$ of Algorithm~\ref{alg:ad_SVRG}. This selection permits us to do achieve two \textit{orthogonal goals} at the same time,
\begin{itemize}
    \item Bounding the variance of the process, $\E\left[ \sum_{t=0}^{T-1} \norm{\nabla_{t} - \nabla f(x_{t})} \right] \leq \tilde{O}\left(n^{1/4} \sqrt{T}\right)$ (see Section~\ref{s:theorem} and Lemma~\ref{l:variance_1}). 
    
    \item Bounding the sum, $\E\left[\sum_{t=0}^{T-1}\norm{\nabla_t} \right] \leq \tilde{O}\left(n^{5/4}\sqrt{T} \cdot \E\left[\sum_{t=0}^{T-1}\gamma_t^3\norm{\nabla_t}^2\right]\right)$ (see Lemma~\ref{l:Lip_cont} and Lemma~\ref{l:1}). 
\end{itemize}
The third important thing that the selection of $\gamma_t$ does is that it permits us to upper bound the term 
$\tilde{O}\left(n^{5/4}\ \E\left[\sum_{t=0}^{T-1}\gamma_t^3\norm{\nabla_t}^2\right]\right)$ by $\tilde{O}\left(n^{1/4}\right)$. The proof of the latter upper bound can be found in the proof of Theorem~\ref{t:non-convex} that due to lack of space is deferred to appendix. It is interesting that all the above three different purposes can be handled by selecting $\gamma_t = n^{-1/4}\beta_0(\sqrt{n}\cdot G_0^2 + \sum_{s=0}^t\norm{\nabla_s}^2)^{-1/2}$.  

\section{Experiments}

We complement our theoretical findings with an evaluation of the numerical performance of the algorithm under different experimental setups. 
We aim to highlight the sample complexity improvements over simple stochastic methods, while displaying the advantages of adaptive step-size strategies. For that purpose we design two setups; first, we consider the minimization of a convex loss with a non-convex regularizer in the sense of \citet{WJZLT19} and in a second part we consider an image classification task with neural networks. 

\subsection{Convex loss with a non-convex regularizer}

We consider the following problem:
$$
    \min_{x \in \mathbb R^d} \frac{1}{n}\sum_{i=1}^{n} \ell( x, (a_i, b_i) ) + \lambda g(x)
$$
where $\ell(x, (a_i, b_i))$ is the loss with respect to the decision variable/weights $x$ with $(a_i, b_i)$ denoting the (feature vector, label) pair. We select $g = \sum_{i=1}^{d} \frac{x_i^2}{ 1 + x_i^2 } $, similar to \citet{WJZLT19}, where the subscript denotes the corresponding dimension of $x$. We compare \adaspider against the original \spider, \spiderboost, \svrg, \adasvrg and two non-VR methods, \sgd and \adagrad. We picked two datasets from LibSVM, namely \texttt{a1a, mushrooms}. We initialize each algorithm from the same point and repeat the experiments \nExp times, then report the mean convergence with standard deviation as the shaded region around the mean curves. We tune the algorithms by executing a parameter sweep for their initial step-size over an interval of values which are exponentially scaled as $\bc{ 10^{-3}, 10^{-2}, ..., 10^{2}, 10^{3} }$. After tuning the algorithms on one dataset, we run them with the same parameters for the others.

\begin{figure}[ht]
\centering
     \begin{subfigure}{0.48\textwidth}
         \centering
         \includegraphics[width=0.9\textwidth]{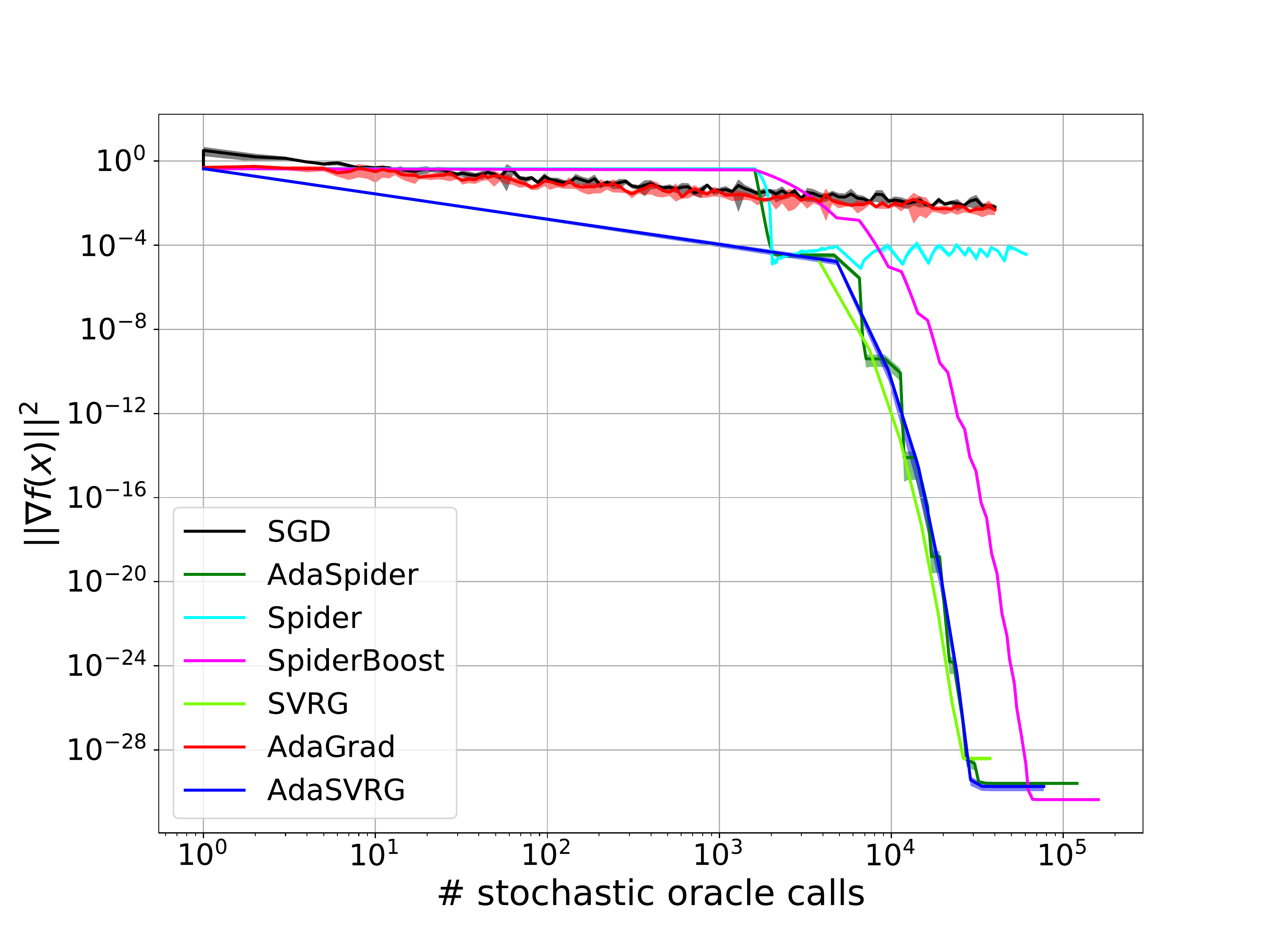}
         \caption{a1a}
         \label{subfig:a1a}
     \end{subfigure}
     \begin{subfigure}{0.48\textwidth}
         \centering
         \includegraphics[width=0.9\textwidth]{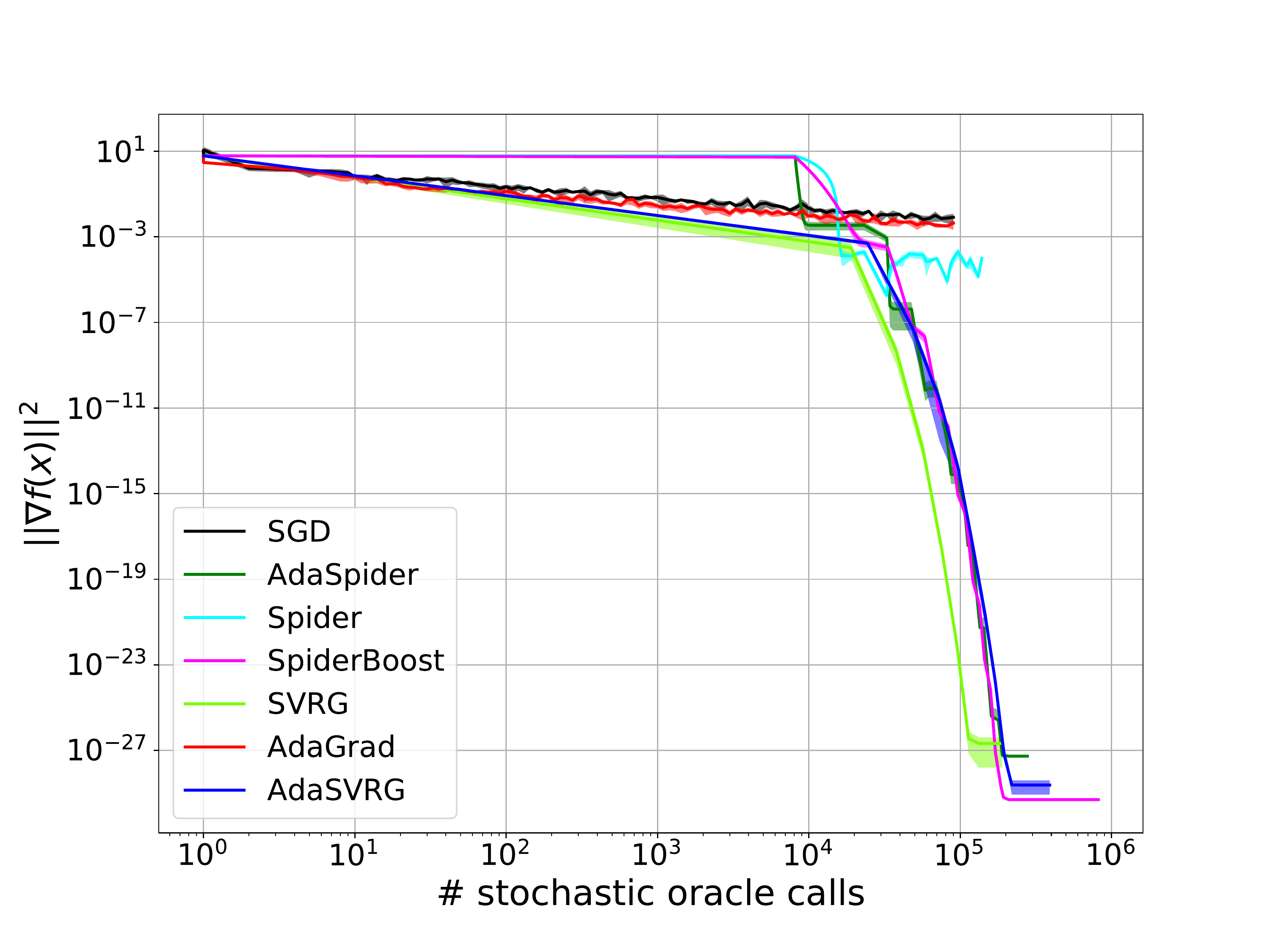}
         \caption{mushrooms}
         \label{subfig:mushrooms}
     \end{subfigure}
     
     \begin{subfigure}{0.48\textwidth}
        \centering
        \includegraphics[width=0.9\textwidth]{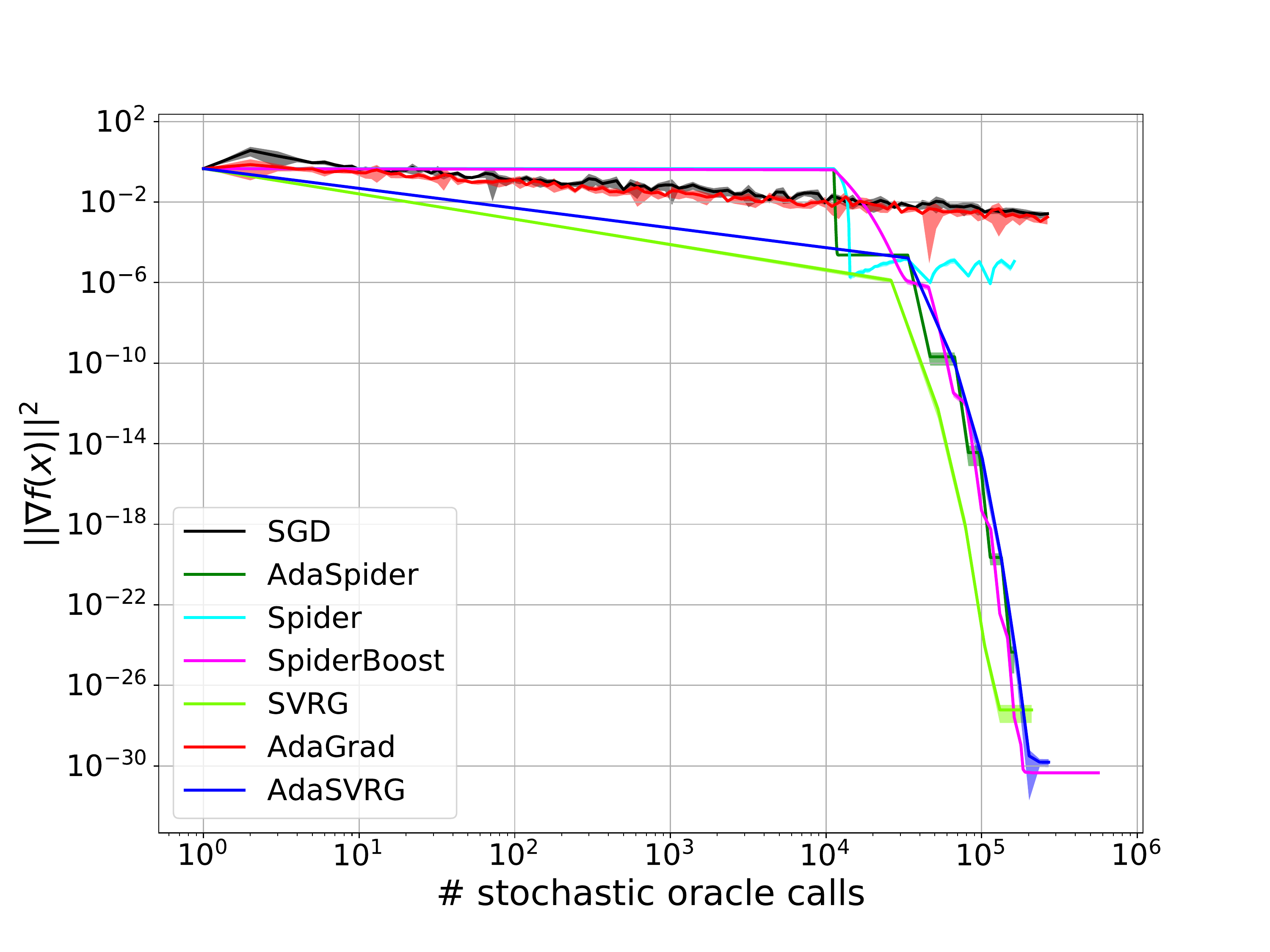}
        \caption{a6a}
        \label{subfig:a6a}
    \end{subfigure}
    \begin{subfigure}{0.48\textwidth}
        \centering
        \includegraphics[width=0.9\textwidth]{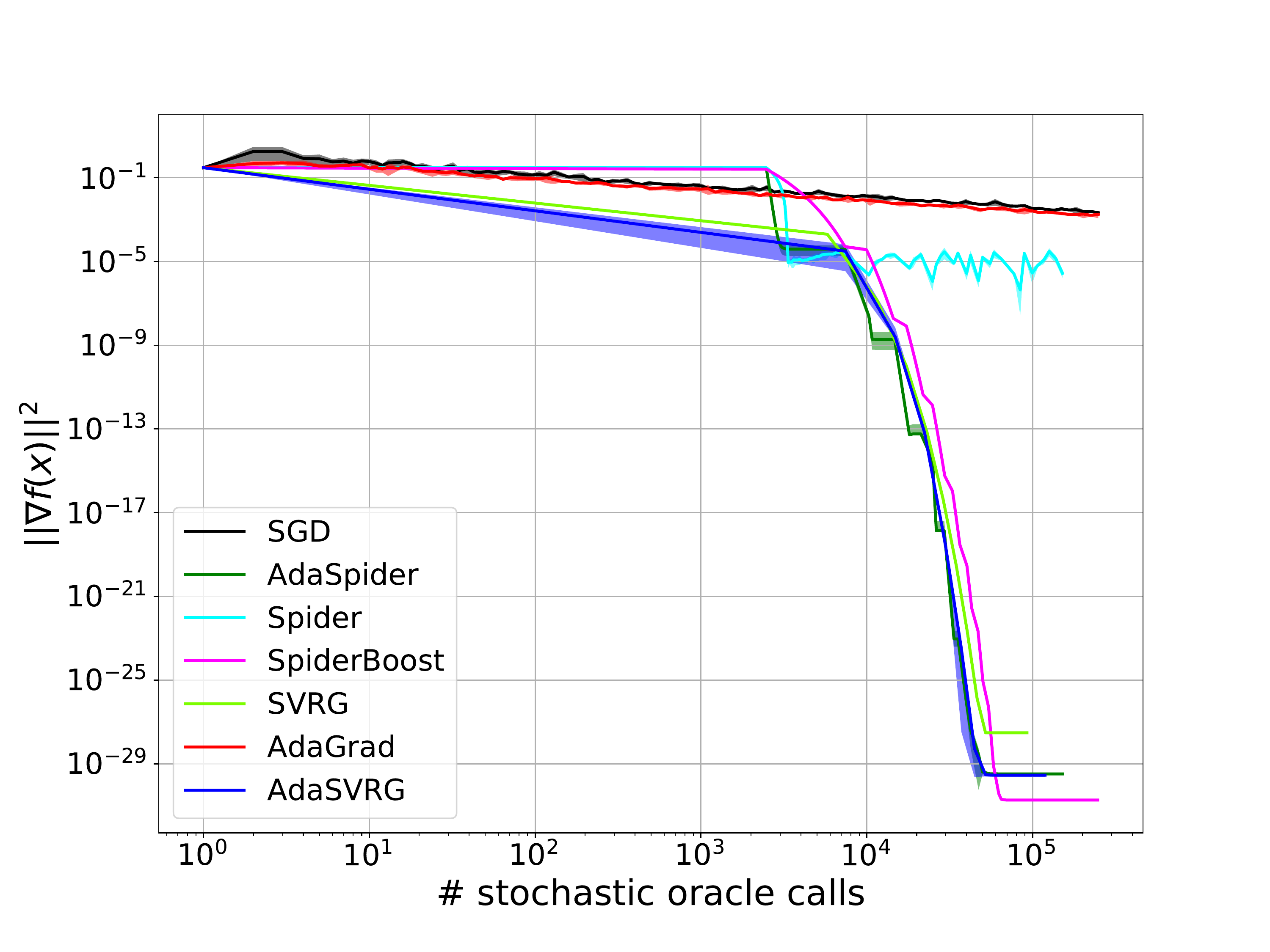}
        \caption{w1a}
        \label{subfig:w1a}
    \end{subfigure}
    \begin{subfigure}{0.48\textwidth}
        \centering
        \includegraphics[width=0.9\textwidth]{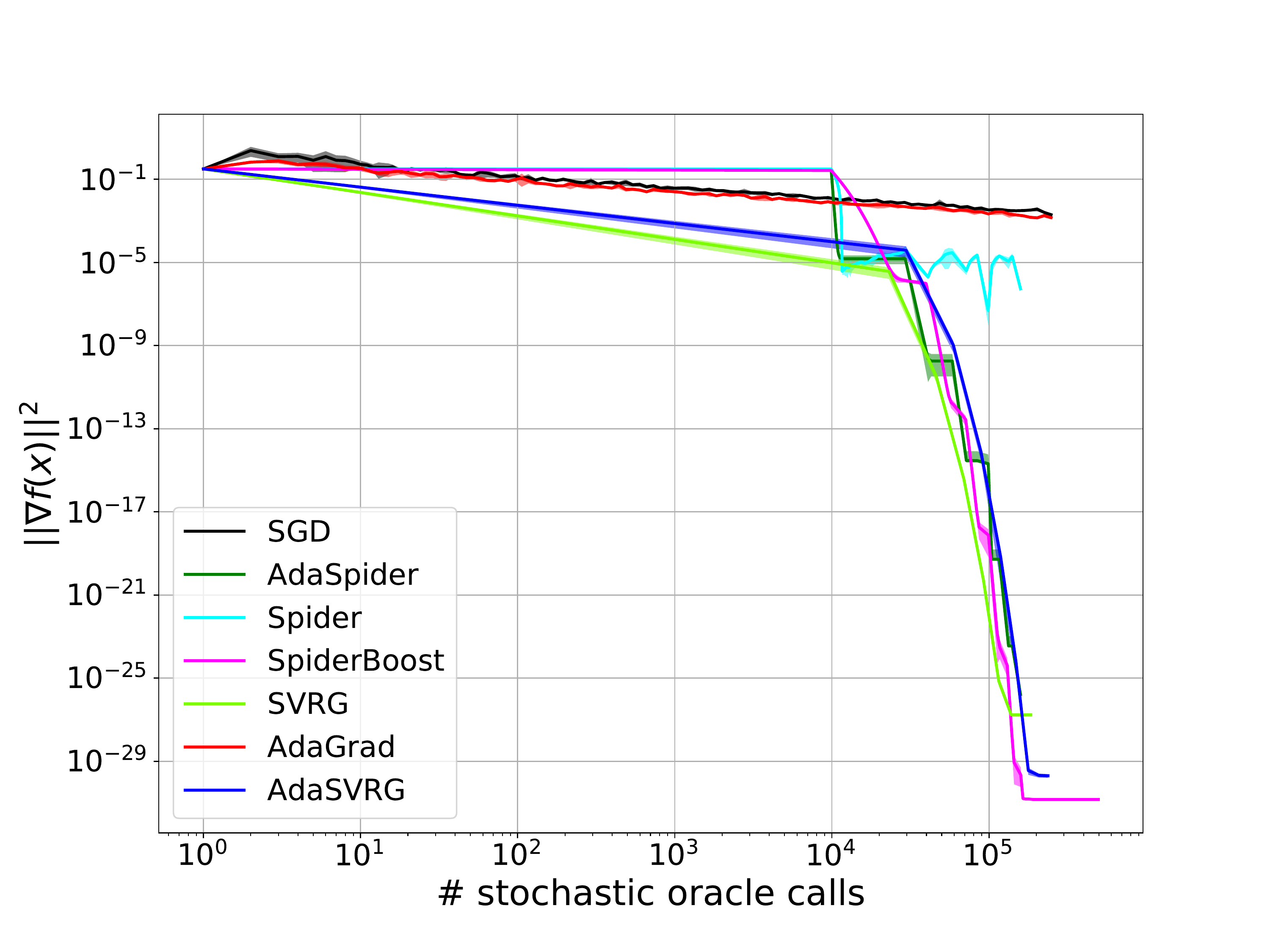}
        \caption{w5a}
        \label{subfig:w5a}
    \end{subfigure}
    \begin{subfigure}{0.48\textwidth}
        \centering
        \includegraphics[width=0.9\textwidth]{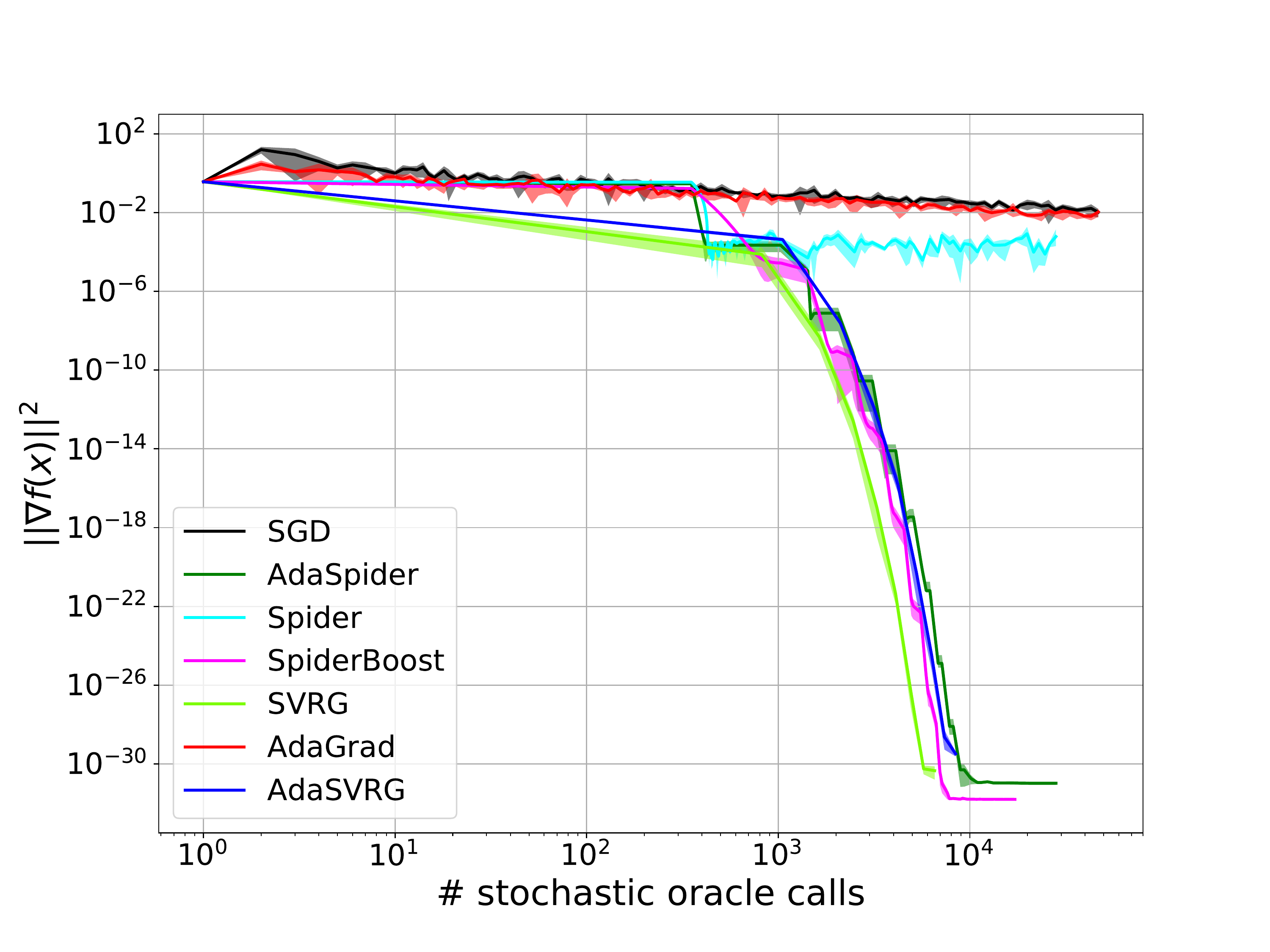}
        \caption{ionosphere-scale}
        \label{subfig:ionosphere-scale}
    \end{subfigure}
        \caption{Logistic regression with non-convex regularizer on LibSVM datasets}
        \label{fig:leastsquares-gradient}
\end{figure}
First, we clearly observe the difference between \sgd \& \adagrad, and the rest of the pack, which demonstrates the superior sample complexity of VR methods in general. Among VR algorithms, there does not seem to be any concrete differences with similar convergence, except for \spider. The performance of \adaspider is on par with other VR methods, and superior to \spider. The unexpected behavior of \spider algorithm has previously been documented in \citet{WJZLT19}. From a technical point of view, this behavior is predominantly due to the \textit{accuracy dependence} in the step-size, making the step-size unusually small. We had to run \spider beyond its prescribed setting and tune the step-size with a large initial value to make sure the algorithm makes observable progress. 

\subsection{Experiments with neural networks}

In our second setup, we train neural networks with our variance reduction scheme. Our focus is on standard image classification tasks trained with the cross entropy loss \cite{bridle1989training, bridle1990probabilistic}. Denoting by $C$ the number of classes, the considered datasets in this section consist of $n$ pairs $(a_i, b_i)$ where $a_i$ is a vectorized image and $b_i \in \mathbb{R}^C$ is a one-hot encoded class label. A neural network is parameterized with weights $x \in \mathbb{R}^d$ and its output on is denoted $\text{net(x, a)} \in \mathbb{R}^C$, where $a$ is the input image. The training of the network consists of solving the following optimization problem:
$
\min_{x \in \mathbb R^d} \frac{1}{n}\sum_{i=1}^{n} \left( - b_i^\top \text{net}(x, a_i) + \texttt{logsumexp}(\text{net}(x, a_i))\right).
$
This is the default setup for doing image classification and we test our algorithm on two benchmark datasets : \texttt{MNIST}\cite{mnist} and \texttt{FashionMNIST}\cite{fmnist}. We choose 3-layer fully connected network with dimensions $[28 * 28, 512, 512, 10]$. The activation function is the \texttt{ELU} \cite{elu}.

\paragraph{Initialization: } The initialization of the network is a crucial component to guarantee good performance. We find that a slight modification of the \texttt{Kaiming Uniform} initialization \cite{he2015delving} improves the stability of the tested variance reduction schemes. For each layer in the network with $d_{in}$ inputs, the original method initializes the weights with independent uniform random variables with variance $\frac{1}{d_{in}}$. Our modification initializes with a smaller variance of $\frac{c_{\text{init}}}{d_{in}}$ with $c_{\text{init}}$ in the order of $0.01$. With this choice, we observed that fewer  variance reductions schemes diverged, and standard algorithms like \texttt{SGD} and \texttt{AdaGrad}(for which the original method was tuned), were not penalized and performed well. This often overlooked initialization heuristic is the only ``tuning" needed for \texttt{AdaSpider}.

\begin{figure}[ht]
\centering  
         \begin{subfigure}{0.49\textwidth}
         \centering
         \includegraphics[width=\textwidth]{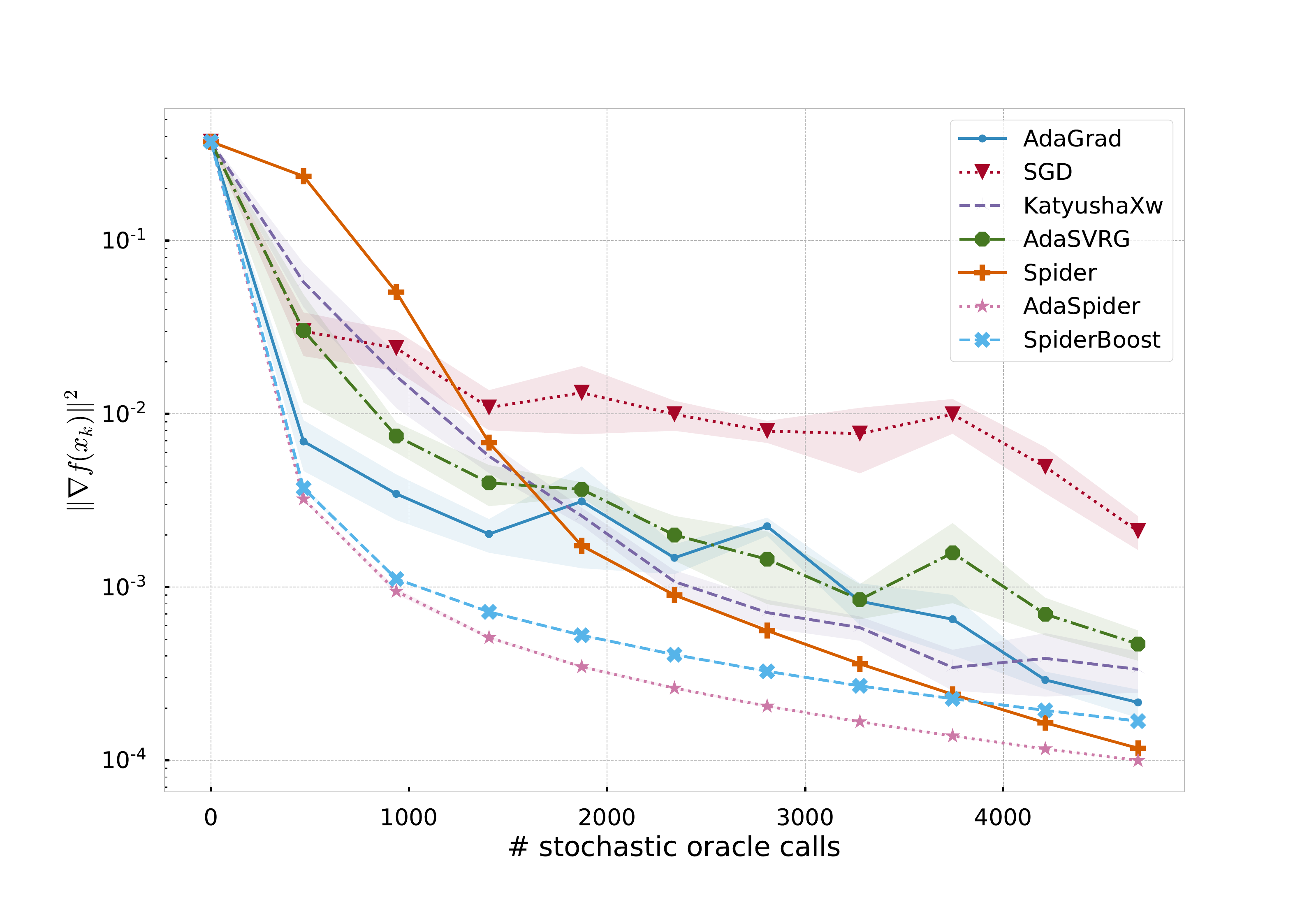}
         \caption{\texttt{MNIST}}
         \label{subfig:mnist}
     \end{subfigure}
    \hfill
    \begin{subfigure}{0.49\textwidth}
         \centering
         \includegraphics[width=\textwidth]{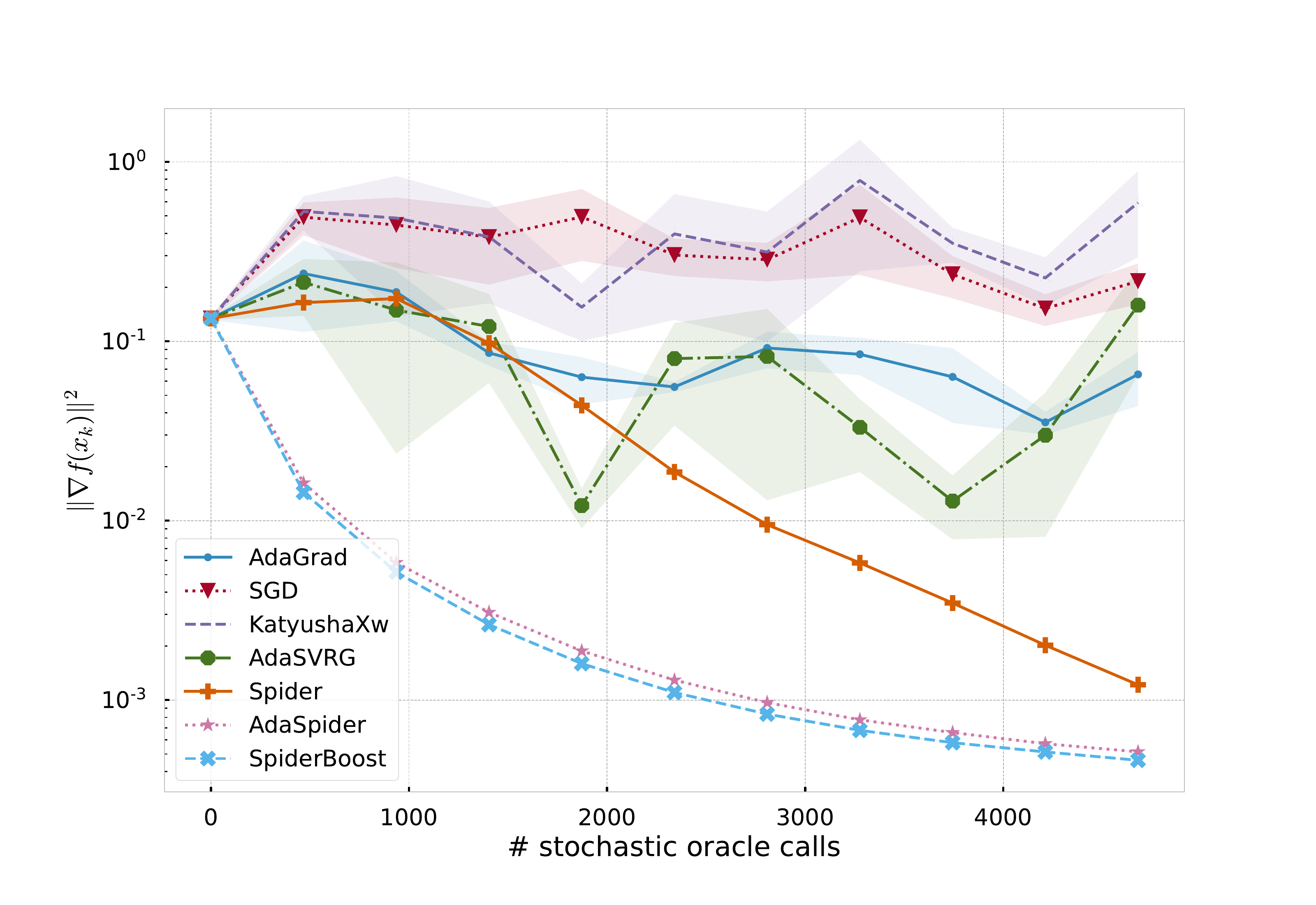}
         \caption{\texttt{FashionMNIST}}
         \label{subfig:fashionmnist}
     \end{subfigure}
        \caption{Gradient norms throughout the epochs for image classification with neural networks (curves are averaged over 5 independent runs and the shaded region are the standard error).}
    \label{fig:neural-net-grad-norms}
\end{figure}

\begin{table}[ht]
\centering
\resizebox{\textwidth}{!}{\begin{tabular}{rlclcll}
\multicolumn{1}{c}{}                      & \multicolumn{2}{c}{\texttt{MNIST}}                                                     & \multicolumn{2}{c}{\texttt{FashionMNIST}}                        &  &  \\
\multicolumn{1}{c}{}                      & \multicolumn{2}{c}{Batch Size $= 32$, $c_{\text{init}} = 0.03$}                           & \multicolumn{2}{c}{Batch Size $= 128$, $c_{\text{init}} = 0.01$}    &  &  \\ \cline{1-5}
\multicolumn{1}{l}{\textbf{Algorithm}}    & \textbf{Parameters}                      & \multicolumn{1}{l|}{\textbf{Test Accuracy}} & \textbf{Parameters}                     & \textbf{Test Accuracy} &  &  \\ \cline{1-5}
\multicolumn{1}{r|}{\texttt{AdaGrad}\cite{duchi2011adaptive}}     & $\eta = 0.01, \; \epsilon = 10^{-4}$     & \multicolumn{1}{c|}{$97.86$}                & $\eta = 0.01, \; \epsilon = 10^{-4}$    & $86.19$                &  &  \\
\multicolumn{1}{r|}{\texttt{SGD}\cite{sgd}}         & $\eta = 0.01$                            & \multicolumn{1}{c|}{$98.11$}                & $\eta = 0.01$                           & $85.83$                &  &  \\
\multicolumn{1}{r|}{\texttt{KatyushaXw}\cite{allenzhu2018katyushax}}  & $\eta = 0.005$                           & \multicolumn{1}{c|}{$97.93$}                & $\eta = 0.01$                           & $86.27$                &  &  \\
\multicolumn{1}{r|}{\texttt{AdaSVRG}\cite{dubois2021svrg}}     & $\eta = 0.1$                             & \multicolumn{1}{c|}{$98.03$}                & $\eta = 0.1$                            & $86.82$                &  &  \\
\multicolumn{1}{r|}{\texttt{Spider}\cite{fang2018spider}}      & $\epsilon = 0.01,\; L = 100.0, \; n_0=1$ & \multicolumn{1}{c|}{$97.53$}                & $\epsilon = 0.01,\; L = 50.0, \; n_0=1$ & $82.22$                &  &  \\
\multicolumn{1}{r|}{\texttt{SpiderBoost}\cite{WJZLT19, nguyen2017sarah}} & $L = 200$                                & \multicolumn{1}{c|}{$97.01$}                & $L = 120$                               & $84.42$                &  &  \\
\multicolumn{1}{r|}{\texttt{AdaSpider}}   & $n = 60000$                              & \multicolumn{1}{c|}{$97.49$}                & $n = 60000$                             & $84.09$                &  &  \\ \cline{1-5}
\end{tabular}}
\caption{Algorithm parameters and test accuracies (average of 5 runs, in $\%$)}
\label{tab:params}
\end{table}

\paragraph{Observations: } We observe (Figure \ref{fig:neural-net-grad-norms}) that \texttt{AdaSpider} performs as well as other variance reduction methods in terms of minimizing the gradient norm. The key message here is that it does so without the need for extensive tuning. This diminished need for tuning is a welcome feature for deep learning optimization, but, often the true metric of interest is not the gradient norm, but the accuracy on unseen data, and on this metric variance reduction schemes are not yet competitive with simpler methods like \texttt{SGD}. With \texttt{AdaSpider}, the focus can go to finding the right initialization scheme and architecture to ensure good generalization without being distracted by other parameters like the step-size choice.

\section*{Acknowledgments}
This project has received funding from the European Research Council (ERC) under the European Union's Horizon 2020 research and innovation program (grant agreement n° $725594$), the Swiss National Science Foundation (SNSF) under grant number $200021\_205011$ and Innosuisse.
 
\bibliography{refs}
\bibliographystyle{plainnat}

\newpage
\appendix
\section{Omitted Proofs}
We first state two technical lemmas that have been extensively used in the analysis of adaptive methods, the proofs of which can be found in \cite{levy2018online}.
\begin{lemma}\label{l:sqrt}

 Let a sequence of non-negative real numbers $\alpha_1,\ldots,\alpha_ T\geq 0$ then
 \[\sqrt{\sum_{t=1}^T \alpha_t} \leq \sum_{t=1}^T \frac{\alpha_t}{\sqrt{\sum_{s =1}^t \alpha_{s}}}\]
 \end{lemma}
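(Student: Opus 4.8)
The plan is to prove the inequality by a telescoping argument after introducing the partial sums $S_t := \sum_{s=1}^t \alpha_s$ for $t = 0, 1, \ldots, T$, so that $S_0 = 0$ and the right-hand side reads $\sum_{t=1}^T (S_t - S_{t-1})/\sqrt{S_t}$. First I would dispose of the degenerate terms: whenever $S_t = 0$ one also has $\alpha_t = S_t - S_{t-1} = 0$, so that summand is $0$ under the usual convention $0/0 = 0$ and may be dropped; equivalently, one restricts the sum to the indices $t$ with $S_t > 0$, which form a final block $\{t_0, t_0 + 1, \ldots, T\}$ unless every $\alpha_t$ vanishes (in which case both sides are $0$ and there is nothing to prove).

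The heart of the argument is the elementary per-term bound
\[
\sqrt{S_t} - \sqrt{S_{t-1}} \;=\; \frac{S_t - S_{t-1}}{\sqrt{S_t} + \sqrt{S_{t-1}}} \;\le\; \frac{S_t - S_{t-1}}{\sqrt{S_t}} \;=\; \frac{\alpha_t}{\sqrt{S_t}},
\]
where the first step rationalizes the numerator and the inequality uses $S_t - S_{t-1} = \alpha_t \ge 0$ together with $\sqrt{S_{t-1}} \ge 0$, so enlarging the denominator only decreases the fraction. Summing over $t = 1, \ldots, T$, the left-hand side telescopes to $\sqrt{S_T} - \sqrt{S_0} = \sqrt{S_T} = \sqrt{\sum_{t=1}^T \alpha_t}$, while the right-hand side is exactly $\sum_{t=1}^T \alpha_t / \sqrt{S_t}$, which gives the claim.

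As an alternative route I would note the same statement follows by induction on $T$: the base case $T = 1$ is an equality, and the inductive step reduces to verifying $\sqrt{a - b} + b/\sqrt{a} \ge \sqrt{a}$ for $0 \le b \le a$, which after rearranging to $\sqrt{a - b} \ge (a - b)/\sqrt{a}$ and squaring both nonnegative sides becomes $(a - b)b \ge 0$. I do not expect a genuine obstacle here; the only point needing a little care is the bookkeeping around vanishing partial sums and division by zero, which the convention fixed above handles cleanly.
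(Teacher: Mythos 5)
Your proof is correct; the paper does not reprove this lemma but defers to \cite{levy2018online}, where the same standard argument appears. Both your telescoping bound $\sqrt{S_t}-\sqrt{S_{t-1}} \le \alpha_t/\sqrt{S_t}$ and the inductive variant are the usual routes, and your handling of vanishing partial sums is the right bookkeeping.
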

\begin{lemma}\label{l:log}
 Let a sequence of non-negative real numbers $\alpha_1,\ldots,\alpha_ T\geq 0$ then
 \[\sum_{t=1}^T \frac{\alpha_t}{1 + \sum_{s =1}^t \alpha_{s}} \leq \log \left(1 + \sum_{t=1}^T \alpha_t \right) \]
 \end{lemma}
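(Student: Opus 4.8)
The plan is to reduce the sum to a telescoping series by introducing the partial sums $S_0 := 1$ and $S_t := 1 + \sum_{s=1}^t \alpha_s$ for $t \geq 1$. Since every $\alpha_s \geq 0$, the sequence $(S_t)_{t \geq 0}$ is non-decreasing and satisfies $S_t \geq 1 > 0$, so no denominator vanishes, and moreover $\alpha_t = S_t - S_{t-1}$. With this notation the left-hand side of the claim becomes $\sum_{t=1}^T (S_t - S_{t-1})/S_t = \sum_{t=1}^T \bigl(1 - S_{t-1}/S_t\bigr)$, while the right-hand side is exactly $\log S_T = \log S_T - \log S_0$, using $S_0 = 1$. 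So it suffices to bound each summand $1 - S_{t-1}/S_t$ by $\log S_t - \log S_{t-1}$.

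For the per-term bound I would invoke the elementary inequality $\log r \leq r - 1$, valid for all $r > 0$ (immediate from $1 + z \leq e^{z}$ applied with $z = \log r$, or equivalently from concavity of $\log$). Taking $r = S_{t-1}/S_t \in (0,1]$ gives $\log(S_{t-1}/S_t) \leq S_{t-1}/S_t - 1$, that is, $1 - S_{t-1}/S_t \leq \log(S_t/S_{t-1}) = \log S_t - \log S_{t-1}$. As an alternative derivation of exactly the same bound, one can use that $x \mapsto 1/x$ is decreasing on $(0,\infty)$: since $S_{t-1} \leq S_t$,
\[
\frac{S_t - S_{t-1}}{S_t} = \int_{S_{t-1}}^{S_t} \frac{1}{S_t}\,dx \leq \int_{S_{t-1}}^{S_t} \frac{1}{x}\,dx = \log S_t - \log S_{t-1}.
\]

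Summing the per-term inequality over $t = 1, \ldots, T$, the right-hand side telescopes, and we obtain
\[
\sum_{t=1}^{T} \frac{\alpha_t}{1 + \sum_{s=1}^{t}\alpha_s} = \sum_{t=1}^{T} \frac{S_t - S_{t-1}}{S_t} \leq \log S_T - \log S_0 = \log S_T = \log\Bigl(1 + \sum_{t=1}^{T}\alpha_t\Bigr),
\]
which is precisely the statement. The degenerate case where all $\alpha_t = 0$ makes both sides $0$, and the bound $S_t \geq 1$ throughout ensures every logarithm and quotient is well defined, so no side conditions are needed. There is essentially no hard step here: the only points to keep straight are the base case $S_0 = 1$ (so that $\log S_0 = 0$ and the right-hand side collapses to $\log S_T$) and the direction of the per-term inequality, both of which follow directly from concavity of the logarithm.
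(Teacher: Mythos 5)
Your proof is correct and complete. The paper itself does not prove Lemma~\ref{l:log}; it simply cites \citet{levy2018online} for it, so there is no in-paper argument to compare against. Your argument is the standard one for this folklore inequality: writing $S_t = 1 + \sum_{s=1}^t \alpha_s$, bounding each term via $\frac{S_t - S_{t-1}}{S_t} \leq \log S_t - \log S_{t-1}$ (either from $\log r \leq r-1$ with $r = S_{t-1}/S_t$ or from the integral comparison $\int_{S_{t-1}}^{S_t} \frac{dx}{S_t} \leq \int_{S_{t-1}}^{S_t} \frac{dx}{x}$), and telescoping. All side conditions are handled correctly, since $S_t \geq 1$ guarantees positivity of every denominator and of every argument of the logarithm, and the base case $S_0 = 1$ makes the telescoped sum collapse to exactly the claimed right-hand side.
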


\begin{replemma}{l:var_red}
Define the gradient estimator at point $x$ as $\nabla_x := \nabla f_i(x) - \nabla f_i(y) + \nabla_y$ where $i$ is sampled uniformly at random from $\{1,\ldots,n\}$. Then,
\[\E\left[ \norm{\nabla_x - \nabla f(x)}^2 \right] \leq L^2 \norm{x - y}^2 + \E\left[ \norm{\nabla_{y} - \nabla f(y)}^2 \right]\]
\end{replemma}
\begin{proof}
\begin{eqnarray*}
\E\left[ \norm{\nabla_x - \nabla f(x)}^2 \right] &=& \E\left[ \norm{\nabla f_i(x) - 
\nabla f_i(y) + \nabla_y -\nabla f(x)}^2 \right]\\
&=&\E\left[ \norm{\nabla f_i(x) - 
\nabla f_i(y) + \nabla_y -\nabla f(x) + \nabla f(y) - \nabla f(y)}^2 \right]\\
&=& \E\left[ \norm{\nabla f_i(x) - 
\nabla f_i(y) -\nabla f(x) + \nabla f(y)}^2 \right]\\
&+& 2\cdot \E\left[ \left \langle \nabla f_i(x) - 
\nabla f_i(y) -\nabla f(x) + \nabla f(y), \nabla_y - \nabla f(y) \right \rangle\right]\\
&+& \E\left[ \norm{\nabla_y - \nabla f(y)}^2 \right]
\end{eqnarray*}

Notice that $\E\left[ \nabla f_i(x) - 
\nabla f_i(y) -\nabla f(x) + \nabla f(y)\right] =0$ due to the fact that $i$ is selected uniformly at random in $\{1,\ldots,n\}$ and thus $\E\left[\nabla f_i(x) -  \nabla f_i(y) \right]= \nabla f(x) - \nabla f(y)$. The latter implies that, 
\begin{eqnarray*}
\E\left[ \norm{\nabla_x - \nabla f(x)}^2 \right] &=& \E\left[ \norm{\nabla f_i(x) - 
\nabla f_i(y) -\nabla f(x) + \nabla f(y)}^2 \right]
+ \E\left[ \norm{\nabla_y - \nabla f(y)}^2 \right]\\
&\leq& \E\left[ \norm{\nabla f_i(x) - 
\nabla f_i(y)}^2 \right] + \E\left[ \norm{\nabla_y - \nabla f(y)}^2 \right]\\
&\leq& L^2 \cdot \E\left[ \norm{x-y}^2\right] + \E\left[ \norm{\nabla_y - \nabla f(y)}^2 \right]
\end{eqnarray*}
where the first inequality follows by the identity $\E\left[\norm{X - \E\left[X\right]}^2\right] = \E[\norm{X}^2] - \norm{\E\left[ X\right]}^2$ and the second inequality by the smoothness of the function $f_i(x)$. 
\end{proof}

\begin{replemma}{l:bound_traj}
Let $x_0,x_1,\ldots,x_T$ the points produced by Algorithm~\ref{alg:ad_SVRG}. Then,
\[\sum_{t=0}^{T-1}\norm{\nabla_t}^2 \leq \mathcal{O}\left(n^2T^3 \cdot \left(\frac{L^2}{\beta_0^2} + \norm{\nabla f(x_0)}^2\right) \right)\]
\end{replemma}
\begin{proof}
The selection of the step-size in Step~$9$ of Algorithm~\ref{alg:ad_SVRG} implies that
$\norm{x_{t+1} - x_{t}}^2 = \norm{\gamma_t \nabla_t}^2 \leq 1/\beta_0^2$. Due to the fact that every $n$ iterations a full-gradient computation is performed, the estimator $\nabla_t := \nabla f_{i_t}(x_t) - \nabla f_{i_t}(x_{t-1}) + \nabla_{t-1}$ can be equivalently written as
\[\nabla_t = \sum_{s = t - t~\mathrm{mod}~n + 1}^{t}\left( \nabla f_{i_s}(x_s) - \nabla f_{i_s}(x_{s-1})\right) + \nabla f(x _{t - t~\mathrm{mod}~n})\]
As a result,
\begin{eqnarray*}
\norm{\nabla _t}^2 &=& \norm{\sum_{s = t - t ~\mathrm{mod}~n + 1}^{t}\left( \nabla f_{i_s}(x_s) - \nabla f_{i_s}(x_{s-1})\right) + \nabla f(x _{t - t~\mathrm{mod}~n})  }^2\\
&\leq& 2\cdot \norm{ \sum_{s = t - t~\mathrm{mod}~n + 1}^{t} \nabla f_{i_s}(x_s) - \nabla f_{i_s}(x_{s-1})}^2 + 2 \cdot \norm{\nabla f(x_{t - t~\mathrm{mod}~n})}^2\\
&\leq& 2 n \cdot \sum_{s = t - t~\mathrm{mod}~n + 1}^{t}\norm{  \nabla f_{i_s}(x_s) - \nabla f_{i_s}(x_{s-1})}^2 + 2 \cdot \norm{\nabla f(x_{t - t~\mathrm{mod}~n})}^2\\
&\leq& 2 nL^2 \cdot \sum_{s = t - t~\mathrm{mod}~n + 1}^{t}\norm{  x_s - x_{s-1}}^2 + 2 \cdot \norm{\nabla f(x_{t - t~\mathrm{mod}~n})}^2\\
&\leq& \frac{2L^2 n^2}{\beta_0^2} + 2\cdot \norm{\nabla f(x_{t - t~\mathrm{mod}~n})}^2
\end{eqnarray*}
Now, we want to upper bound $\norm{\nabla f(x_t)}$ for any $t \leq T$ with respect to the initial gradient norm. Using again the step-size selection $\gamma_t$ we get, 
\begin{align*}
    \norm{\nabla f(x_t)} &= \norm{\nabla f(x_t) - \nabla f(x_0) + \nabla f(x_0)}\\
    &\leq \norm{\nabla f(x_t) - \nabla f(x_0)} + \norm{\nabla f(x_0)} \tag{Triangular inequality}\\
    &\leq L\norm{x_t - x_0} + \norm{\nabla f(x_0)} \tag{Smoothness}\\
    &\leq L\norm{x_t - x_{t-1}} + L\norm{x_{t-1} - x_0} + \norm{\nabla f(x_0)} \tag{Triangular inequality}\\
    &\leq L\sum_{i=1}^{t} \norm{x_i - x_{i-1}} + \norm{\nabla f(x_0)}\\
    &\leq \frac{Lt}{\beta_0} + \norm{\nabla f(x_0)}
\end{align*}
As a result,
\begin{align*}
    \sum_{t=0}^{T-1} \norm{\nabla_t}^2 &\leq \sum_{t=0}^{T-1}\left(\frac{2L^2 n^2}{\beta_0^2} + 2\cdot \norm{\nabla f(x_{t - t~\mathrm{mod}~n})}^2\right)\\
    &\leq \frac{2L^2 n^2}{\beta_0^2} T +  2\sum_{t=0}^{T-1} \norm{\nabla f(x_t)}^2\\
    &\leq \frac{2L^2 n^2}{\beta_0^2} T +  2\sum_{t=0}^{T-1} (\frac{Lt}{\beta_0} +\norm{\nabla f(x_0)})^2\\
    &= \frac{2L^2 n^2}{\beta_0^2} T +  2\sum_{t=0}^{T-1}\left( \frac{L^2 t^2}{\beta_0^2} + 2\frac{Lt}{\beta_0}\norm{\nabla f(x_0)} +  \norm{\nabla f(x_0)}^2\right)\\
    &\leq \frac{2L^2 n^2}{\beta_0^2}T + \frac{2L^2T^3}{\beta_0^2} + \frac{4LT^2 \norm{\nabla f(x_0)}}{\beta_0} + 2T \norm{\nabla f(x_0)}^2
\end{align*}
\end{proof}

\begin{replemma}{l:variance_1}
Let $x_0,x_1\ldots,x_T$ the sequence of points produced by Algorithm~\ref{alg:ad_SVRG}. Then,
\[\sum_{t=0}^{T-1}\E \left[ \norm{\nabla_t - \nabla f(x_t)}^2 \right] \leq \mathcal{O} \left(\frac{L n^{1/4}}{\beta_0} \sqrt{\log\left( 1 + nT \cdot\left( \frac{L}{\beta_0G_0} +\frac{ \norm{\nabla f(x_0)}}{G_0}\right)\right)}\right).
\]
\end{replemma}
\begin{proof}
\begin{align*}
\E\left[ \sum_{t=0}^{T-1} \norm{\nabla_t - \nabla f(x_t)}\right]
&=& \E\left[ \sqrt{\left(\sum_{t=0}^{T-1} \norm{\nabla_t - \nabla f(x_t)}\right)^2}\right]\\
&\leq& \sqrt{\E\left[ \left(\sum_{t=0}^{T-1} \norm{\nabla_t - \nabla f(x_t)}\right)^2\right]} \tag{Jensen's ineq.}\\
&\leq& \sqrt{T}\cdot \sqrt{\E\left[ \sum_{t=0}^{T-1} \norm{\nabla_t - \nabla f(x_t)}^2\right]}\\
\end{align*}
where the last inequality follows by the fact that $\norm{\sum_{t=0}^{T-1}y_t}^2 \leq T \cdot \sum_{t=0}^{T-1}\norm{y_t}^2$.  By applying Lemma~\ref{l:var_red} to the estimator $\nabla_t:= \nabla f_{i_t}(x_t) - \nabla f_{i_t}(x_{t-1}) + \nabla_{t-1}$ we get,
\begin{eqnarray*}
\E \left[ \norm{\nabla_t - \nabla f(x_t)}^2 \right]
&\leq& L^2 \E\left[\norm{x_t - x_{t-1}}^2\right] + \E \left[ \norm{\nabla_{t-1} - \nabla f(x_{t-1})}^2 \right]\\
&\leq& L^2\E\left[\gamma^2_{t-1}\norm{\nabla_{t-1}}^2 \right] + \E \left[ \norm{\nabla_{t-1} - \nabla f(x_{t-1})}^2\right] \\
&\leq& L^2\E\left[ \gamma^2_{t-1}\norm{\nabla_{t-1}}^2\right] +\ldots + \E \left[ \norm{\nabla_{t-(t~ \mathrm{mod} ~n)} - \nabla f(x_{t-(t~ \mathrm{mod} ~n})}^2\right]\\
&=& \sum_{\tau = t - (t~\text{mod }n) + 1}^{t-1} L^2\E\left[ \gamma^2_{\tau}\cdot\norm{\nabla_{\tau}}^2\right]
\end{eqnarray*}
where the last equality follows by the fact that $\E \left[ \norm{\nabla_{t-(t~ \mathrm{mod} ~n)} - \nabla f(x_{t-(t~ \mathrm{mod} ~n})}^2\right]=0$ (see Step~$3$ of Algorithm~\ref{alg:ad_SVRG}). As explained in Section~\ref{s:theorem}, by a telescoping summation over $t$ we get that
\[\sum_{t=0}^{T-1}\E \left[ \norm{\nabla_t - \nabla f(x_t)}^2 \right]  \leq  L^2 n \cdot \E\left[\sum_{t=0}^{T-1} \gamma_t^2 \cdot \norm{\nabla_t}^2\right].\]
Now as discussed in Section~\ref{s:theorem}, using the step-size selection $\gamma_t$ of Algorithm~\ref{alg:ad_SVRG} we can provide a bound on the total variance $\E \left[ \norm{\nabla_t - \nabla f(x_t)}^2 \right]$ 
\begin{eqnarray*}
\sum_{t=0}^{T-1}\E \left[ \norm{\nabla_t - \nabla f(x_t)}^2 \right] & \leq&  L^2 n ~\E\left[\sum_{t=0}^{T-1} \gamma_t^2 \cdot \norm{\nabla_t}^2\right]\\
&=& \frac{L^2 \sqrt{n}}{\beta_0^2} ~\E\left[\sum_{t=0}^{T-1} \frac{\norm{\nabla_t}^2}{\sqrt{n}G_0^2 + \sum_{s=0}^t \norm{\nabla_s}^2}\right]\\
&\leq& \frac{L^2\sqrt{n}}{\beta_0^2} \log \left( 1 + \E \bs{ \sum_{t=0}^{T-1} \norm{\nabla_t}^2/G_0^2 } \right)\\
&\leq& \frac{L^2\sqrt{n}}{\beta_0^2}\cdot \mathcal{O}\left(\log\left( 1 + nT \cdot\left( \frac{L}{\beta_0G_0} +\frac{ \norm{\nabla f(x_0)}}{G_0}\right)\right)\right)
\end{eqnarray*}
where the second inequality follows by Lemma~\ref{l:log} and the third inequality by Lemma~\ref{l:bound_traj}. Putting everything together we get 
\[\frac{1}{T}\E\left[ \sum_{t=0}^{T-1} \norm{\nabla_{t} - \nabla f(x_{t})} \right] \leq \frac{L n^{1/4}}{\beta_0 \sqrt{T}} \cdot \mathcal{O}\left(\sqrt{\log\left( 1 + nT \cdot\left( \frac{L}{\beta_0G_0} +\frac{ \norm{\nabla f(x_0)}}{G_0}\right)\right)}\right)\]
\end{proof}

\begin{replemma}{l:Lip_cont}
Let $x_0,x_1,\ldots,x_{T-1}$ the sequence of points produced by Algorithm~\ref{alg:ad_SVRG} and $\Delta_0 := f(x_0) - f^\ast$. Then,
\[ \E\left[\sum_{t=0}^{T-1}\norm{\nabla_t} \right] \leq O\left(\Delta_0\cdot \beta_0 + G_0 +   \frac{L}{\beta_0}\log\left(1 + nT \cdot \frac{L + \norm{\nabla f(x_0)}}{G_0}\right) + \beta_0\cdot \E\left[\sum_{t=0}^{T-1}\gamma_t\norm{\nabla f(x_t) - \nabla_t}^2\right]\right)\cdot n^{1/4}\sqrt{T}. \]
\end{replemma}

\begin{proof} 
Let $\mathcal{F}_t$ denote the filtration at round $t$ i.e. all the random choices $\{i_0,\ldots,i_t\}$ and the initial point $x_0$. By the smoothness of $f$ we get that,
\begin{eqnarray*}
\E\left[ f(x_{t+1}) ~|~ \F_t \right] &\leq& \E\left[ f(x_{t}) + \nabla f(x_t)^\top (x_{t+1} - x_t) + \frac{L}{2}\norm{x_t - x_{t+1}}^2~|~ \F_t \right]\\
&=& \E\left[ f(x_{t}) - \gamma_t \nabla_t^\top \nabla f(x_t) + \frac{L}{2}\gamma_t^2\norm{\nabla_t}^2~|~ \F_t \right]\\
&\leq& \E\left[ f(x_{t}) + \frac{\gamma_t}{2} \norm{\nabla_t - \nabla f(x_t)}^2 - \frac{\gamma_t}{2}(1-L\gamma_t)\norm{\nabla_t}^2~|~ \F_t\right]
\end{eqnarray*}
Thus,
\[\E\left[\gamma_t \cdot \norm{\nabla_t}^2 \right] \leq 2 \E\left[ f(x_{t}) - f(x_{t+1}) \right] + \E\left[L \gamma_t^2 \cdot \norm{\nabla_t}^2 \right] + \beta_0 \cdot\E\left[\gamma_t \cdot \norm{\nabla f(x_t) - \nabla_t}^2\right].
\]
and by summing from $t=0$ to $T-1$ we get,
\[\sum_{t=0}^{T-1}\E\left[\gamma_t \cdot \norm{\nabla_t}^2 \right] \leq 2 \Delta_0 + \E\left[ \sum_{t=0}^{T-1} L \gamma_t^2 \cdot \norm{\nabla_t}^2 \right] + \E\left[\sum_{t=0}^{T-1} \gamma_t \cdot \norm{\nabla f(x_t) - \nabla_t}^2\right]
\]
Using the fact that $\gamma_t:= n^{-1/4}\beta_0^{-1}\left(n^{1/2}G_0^2 + \sum_{s=0}^t \norm{\nabla_s}^2\right)^{-1/2}$  on the second summation term,
\begin{eqnarray*}
\E\left[\sum_{t=0}^{T-1} \gamma_t \cdot \norm{\nabla_t}^2 \right]
&\leq&2\Delta_0 + \E\left[\sum_{t=0}^{T-1} L\gamma_t^2 \cdot \norm{\nabla_t}^2\right] + \E\left[\sum_{t=0}^{T-1} \gamma_t \cdot \norm{\nabla f(x_t) - \nabla_t}^2\right]\\
&\leq& 2\Delta_0 + \frac{L}{\beta_0^2} \cdot \E\left[\sum_{t=0}^{T-1} \frac{\norm{\nabla_t}^2}{\sqrt{n}G_0^2 + \sum_{s=0}^t \norm{\nabla_t}^2} \right]\\ &+& \E\left[\sum_{t=0}^{T-1} \gamma_t \cdot \norm{\nabla f(x_t) - \nabla_t}^2\right]\\
&\leq& 2\Delta_0 + \frac{L}{\beta_0^2}\cdot\mathcal{O}\left( \log\left(1 + nT \cdot \left(\frac{L}{\beta_0G_0} + \frac{\norm{\nabla f(x_0)}}{G_0}\right)\right)\right) + \E\left[\sum_{t=0}^{T-1} \gamma_t \cdot \norm{\nabla f(x_t) - \nabla_t}^2\right]
\end{eqnarray*}
Using again the definition of the step-size $\gamma_t:= n^{-1/4}\beta_0^{-1}\left(n^{1/2}G_0^2 + \sum_{s=0}^t \norm{\nabla_s}^2\right)^{-1/2}$ we \textit{lower bound the right-hand side} as follows,
\begin{eqnarray*}
\E\left[\sum_{t=0}^{T-1} \gamma_t\cdot \norm{\nabla_t}^2\right] &\geq&
\E \left[\frac{ \sum_{t=0}^{T-1}\norm{\nabla_t}^2}{n^{1/4}\beta_0 \sqrt{n^{1/2}G_0^2 + \sum_{t=0}^{T-1} \norm{\nabla_t}^2}}
\right]\\
&\geq& \frac{G_0}{\beta_0}\cdot \E \left[\frac{ \sum_{t=0}^{T-1}\norm{\nabla_t}^2/\sqrt{n}G_0^2}{ \sqrt{1 + \sum_{t=0}^{T-1} \norm{\nabla_t}^2/\sqrt{n}G_0^2}}
\right]\\
&\geq& \frac{G_0}{\beta_0}\cdot \left(\E \left[ \sqrt{ \sum_{t=0}^{T-1} \norm{\nabla_t}^2/\sqrt{n}G_0^2 } \right] - 1\right)\\
&\geq&\frac{1}{\beta_0 n^{1/4}\sqrt{T}} \E \left[ \sum_{t=0}^{T-1} \norm{\nabla_t}  \right] - \frac{G_0}{\beta_0}\\
\end{eqnarray*}
By putting everything together we get,
\begin{eqnarray*}
\E\left[\sum_{t=0}^{T-1}\norm{\nabla_t} \right] &\leq& O\left(\Delta_0\cdot \beta_0 + G_0 +   \frac{L}{\beta_0}\log\left(1 + nT \cdot \left(\frac{L}{\beta_0G_0} + \frac{\norm{\nabla f(x_0)}}{G_0}\right)\right)\right)\\
&+& \mathcal{O}\left(\beta_0\cdot \E\left[\sum_{t=0}^{T-1}\gamma_t\norm{\nabla f(x_t) - \nabla_t}^2\right]\right)\cdot n^{1/4}\sqrt{T}.
\end{eqnarray*}
\end{proof}

\begin{replemma}{l:1}
Let $x_0,x_1,\ldots,x_{T-1}$ the sequence of points produced by Algorithm~\ref{alg:ad_SVRG}. Then,
    \begin{align*}
        \E\left[ \sum_{t=0}^{T-1} \gamma_t \cdot \norm{\nabla_t - \nabla f(x_t)}^2 \right] \leq L^2 n \cdot  \E\left[ \sum_{t=0}^{T-1} \gamma_t^3 \cdot \norm{\nabla_t}^2 \right]
    \end{align*}
\end{replemma}

\begin{proof}
Let $\mathcal{F}_t$ denotes the filtration at round $t$ i.e. all the random choices $\{i_0,\ldots,i_t\}$ and the initial point $x_0$.
At first notice that by the definition of $\gamma_t $ in Step~$9$ of Algorithm~\ref{alg:ad_SVRG}, $\gamma_t \leq \gamma_{t-1}$, which we have to do to circumvent non-measurability issues, and thus

\[\E\left[ \gamma_t \norm{\nabla_t - \nabla f(x_t)}^2 ~|~ \F_{t-1}\right] \leq  \E\left[ \gamma_{t-1} \cdot \norm{\nabla_t - \nabla f(x_t)}^2 ~|~ \F_{t-1}\right]\]

Up next we derive a bound on $\E\left[ \gamma_{t-1} \cdot \norm{\nabla_t - \nabla f(x_t)}^2 ~|~ \F_{t-1}\right]$ using similar arguments with the ones used in Lemma~\ref{l:variance_1}. Notice that $\gamma_{t-1}$ is $\mathcal F_{t-1}$-measurable, hence we can treat in independent of the conditional expectation.
\begin{eqnarray*}
&& \E\left[ \gamma_{t-1} \norm{\nabla_t - \nabla f(x_t)}^2 ~|~ \F_{t-1}\right]\\
&=& \gamma_{t-1} \E\left[ \norm{\nabla f_{i_t}(x_t) - 
\nabla f_{i_t}(x_{t-1}) -\nabla f(x_t) + \nabla f(x_{t-1})
+ (\nabla_{t-1} - \nabla f(x_{t-1}))}^2~|~ \F_{t-1}\right]\\
&=& \gamma_{t-1} \E\left[ \norm{\nabla f_{i_t}(x_t) - 
\nabla f_{i_t}(x_{t-1}) -\nabla f(x_t) + \nabla f(x_{t-1})}^2~|~ \F_{t-1}\right]\\
&+& \gamma_{t-1} \underbrace{\E\left[ (\nabla f_{i_t}(x_t) - 
\nabla f_{i_t}(x_{t-1}) -\nabla f(x_t) + \nabla f(x_{t-1}))^\top (\nabla_{t-1} - \nabla f(x_{t-1}))~|~ \F_{t-1}\right]}_{0}\\
&+&\gamma_{t-1} \E\left[ \norm{\nabla_{t-1} - \nabla f(x_{t-1})}^2~|~ \F_{t-1}\right]\\
&=& \gamma_{t-1} \E\left[ \norm{\nabla f_{i_t}(x_t) - 
\nabla f_{i_t}(x_{t-1})}^2~|~ \F_{t-1}\right] + \gamma_{t-1} \E\left[ \norm{\nabla_{t-1} - \nabla f(x_{t-1})}^2~|~ \F_{t-1}\right]\\
&\leq& L^2 \gamma_{t-1} \E\left[ \norm{ x_t - 
x_{t-1}}^2~|~ \F_{t-1}\right] +  \gamma_{t-1} \E\left[ \norm{\nabla_{t-1} - \nabla f(x_{t-1})}^2~|~ \F_{t-1}\right]\\
&=& L^2 \gamma^3_{t-1} \E\left[ \norm{\nabla_{t-1} }^2~|~ \F_{t-1}\right] + \gamma_{t-1} \E\left[ \norm{\nabla_{t-1} - \nabla f(x_{t-1})}^2~|~ \F_{t-1}\right]\\
\end{eqnarray*}
Taking full expectation over all randomness and by the law of total expctation, we get that,
\[\E\left[ \gamma_t \norm{\nabla_t - \nabla f(x_t)}^2\right] \leq L^2 \E\left[ \gamma^3_{t-1} \norm{\nabla_{t-1} }^2\right] + \E\left[ \gamma_{t-1} \norm{\nabla_{t-1} - \nabla f(x_{t-1})}^2\right]\]
\smallskip

\noindent Due to the fact that $\E\left[\norm{\nabla_t - \nabla f(x_t)}\right] =0$ for $t ~\mathrm{mod}~n ==0$ we get that
\[\E\left[ \gamma_t \cdot \norm{\nabla_t - \nabla f(x_t)}^2\right] \leq L^2\E\left [\sum_{s = t - t ~\mathrm{mod}~n}^{t-1} \gamma_s^3 \norm{\nabla_s}^2\right]\]
and thus
\[\E\left[\sum_{t=0}^{T-1} \gamma_t \cdot \norm{\nabla_t - \nabla f(x_t)}^2\right] \leq L^2 n \cdot \E\left [\sum_{t = 0}^{T-1} \gamma_t^3 \norm{\nabla_t}^2\right]\]
\end{proof}

\begin{reptheorem}{t:non-convex}
Let $x_0,x_1,\ldots,x_{T-1}$ be the sequence of points produced by Algorithm~\ref{alg:ad_SVRG} in case $f(\cdot)$ is $L$-smooth. Let us also define $\Delta_0 := f(x_0) - f^\ast$. Then,
\[
\frac{1}{T}\sum_{t=0}^{T-1} \E \left[ \norm{\nabla f(x_t)} \right] \leq O \left( n^{1/4} \cdot 
\frac{\Delta_0\cdot \beta_0 + G_0 + L/\beta_0 + L^2/\beta_0^2G_0}{\sqrt{T}}\cdot \log \left(1 + nT \cdot\left(\frac{L}{\beta_0 G_0} + \frac{\norm{\nabla f(x_0)}}{G_0}\right) \right)
\right)
\]
Overall, Algorithm~\ref{alg:ad_SVRG} with $\beta_0:=1$ and $G_0 :=1$ needs at most $\tilde{O} \left(n+ \sqrt{n} \cdot 
\frac{\Delta_0^2 + L^4}{\epsilon^2}
\right)$ oracle calls to reach an $\epsilon$-stationary point.
\end{reptheorem}
\begin{proof}[Proof of Theorem~\ref{t:non-convex}]
By the triangle inequality we get that
\[\E\left[ \sum_{t=0}^{T-1}\norm{\nabla f(x_t)}\right] \leq \E\left[ \sum_{t=0}^{T-1} \norm{\nabla_t}\right] + \E\left[ \sum_{t=0}^{T-1} \norm{\nabla f(x_t) - \nabla_t}\right]\]
Using the bounds obtained in Lemma~\ref{l:variance_1} and Lemma~\ref{l:Lip_cont} we get that,
\begin{eqnarray*}
\E\left[ \sum_{t=0}^{T-1}\norm{\nabla f(x_t)}\right] &\leq&
\tilde{O}\left(\Delta_0\cdot \beta_0 +G_0 +   \frac{L}{\beta_0}\right)n^{1/4}\sqrt{T}\\
&+& \beta_0 \cdot \E\left[\sum_{t=0}^{T-1}\gamma_t\norm{\nabla f(x_t) - \nabla_t}^2\right] n^{1/4}\sqrt{T}
\end{eqnarray*}
Then by Lemma~\ref{l:1} we get that,
\begin{eqnarray*}
\E\left[ \sum_{t=0}^{T-1}\norm{\nabla f(x_t)}\right] &\leq& 
\tilde{O}\left(\Delta_0\cdot \beta_0 + G_0 +   \frac{L}{\beta_0}\right)n^{1/4}\sqrt{T}\\
&+& \beta_0 \cdot \underbrace{n^{5/4} \sqrt{T} L^2 \cdot \E\left[ \sum_{t=0}^{T-1} \gamma_t^3 \norm{\nabla_t}^2\right]}_{\mathrm{(A)}}
\end{eqnarray*}
Substituing the selection of $\gamma_t$ in term $(\mathrm{A})$ we get,
\begin{eqnarray*}
\beta_0\sqrt{T}L^2 \cdot \E\left[\sum_{t=0}^{T-1} n^{5/4} \gamma_t^3 \norm{\nabla_t}^2  \right] 
&=& \frac{\sqrt{T}L^2}{\beta_0^2}\cdot \E\left[\sum_{t=0}^{T-1} \frac{n^{5/4}}{n^{3/4}\sqrt{n^{1/2}G_0^2 + \sum_{s=0}^t \norm{\nabla_t}^2}}\cdot \frac{\norm{\nabla_t}^2}{{n}^{1/2}G_0^2 + \sum_{s=0}^t \norm{\nabla_t}^2} \right]\\
&\leq& \frac{\sqrt{T}L^2}{\beta_0^2 G_0}\cdot \E\left[\sum_{t=0}^{T-1} \frac{n^{5/4}}{n^{3/4}\sqrt{n^{1/2}}}\cdot \frac{\norm{\nabla_t}^2/G_0^2}{{n}^{1/2} + \sum_{s=0}^t \norm{\nabla_t}^2/G_0^2} \right]\\
&\leq& \frac{\sqrt{T}L^2}{\beta_0^2G_0}\cdot n^{1/4} \cdot \E\left[\sum_{t=0}^{T-1} \frac{\norm{\nabla_t}^2/G_0^2}{1 + \sum_{s=0}^t \norm{\nabla_t}^2/G_0^2} \right]\\
&\leq& \frac{\sqrt{T}L^2}{\beta_0^2G_0} \cdot n^{1/4} \cdot   \mathcal{O}\left(\log \left(1 + nT \cdot\left(\frac{L}{\beta_0 G_0} + \frac{\norm{\nabla f(x_0)}}{G_0}\right) \right) \right)
\end{eqnarray*}
where the forth inequality follows by Lemma~\ref{l:log} and the last by Lemma~\ref{l:bound_traj}. Theorem~\ref{t:non-convex} then follows by dividing both sides with $T$.
\end{proof}

\end{document}